\documentclass{amsart}
\usepackage{amsmath}
\usepackage{amssymb}
\usepackage{amsfonts}

\setcounter{MaxMatrixCols}{10}

\newtheorem{theorem}{Theorem}
\theoremstyle{plain}

\newtheorem{definition}{Definition}
\newtheorem{example}{Example}

\newtheorem{lemma}{Lemma}

\newtheorem{proposition}{Proposition}
\newtheorem{remark}{Remark}

\numberwithin{equation}{section}
\input{tcilatex}

\begin{document}
\title[Local structure in Kaplansky algebras, I.]{On a local structure in
Kaplansky algebras. Definitions and basic properties.}
\author{Alexander A. Katz}
\address{Dr. Alexander A. Katz, Department of Mathematics and Computer
Science, St. John's College of Liberal arts and Sciences, St. John's
University, 300 Howard Ave., DaSilva Academic Center 314, Staten Island, NY
10301, USA}
\email{katza@stjohns.edu}
\date{June 26-27, 2010}
\subjclass[2010]{Primary 46K05, 16W10; Secondary 46L5, 46L10.}
\keywords{Haussdorf projective limits, $C^{\ast }$-algebras, $AW^{\ast }$%
-algebras, locally $C^{\ast }$-algebras, locally $W^{\ast }$-algebras,
locally $AW^{\ast }$-algebras.}
\dedicatory{Dedicated to the memory of Professor George Bachman \\
(Polytechnic University, New York, USA)}

\begin{abstract}
We introduce and study locally $AW^{\ast }$-algebras (Baer locally $C^{\ast
} $-algebras) as a locally multiplicatively convex generalization of $%
AW^{\ast }$-algebras of Kaplansky. Among other basic properties of these
algebras, it is established that:

\begin{itemize}
\item A locally $C^{\ast }$-algebra is a locally $AW^{\ast }$-algebra iff
there exists its Arens-Michael decomposition consisting entirely of $%
AW^{\ast }$-algebras;

\item A bounded part of a locally $AW^{\ast }$-algebra is an $AW^{\ast }$%
-algebra;

\item The Spectral Theorem for locally $AW^{\ast }$-algebras.
\end{itemize}
\end{abstract}

\maketitle

\section{Introduction}

The basic objects of study of what follows are Baer $^{\ast }$-algebras over 
$%
\mathbb{C}
$, which are at the same time representable as Hausdorff projective limits
of projective families of $C^{\ast }$-algebras (locally $C^{\ast }$%
-algebras).

On the one hand, the invention of Baer $^{\ast }$-algebras themselves is
connected with the development of the theory of weakly closed operator
algebras (von Neumann algebras) and the algebraic theory of complete $^{\ast
}$-regular rings. As it is known, the structure of von Neumann algebras
permits one to use geometrical and topological methods in the study of these
algebras, however, a systematic application of such methods occasionally
conceals the purely algebraic origin of some important parts of the theory
of von Neumann algebras (classification of projections, decomposition of
types, polar decomposition, existence of the dimension functions, etc.).
Thus, a tendency toward an axiomatic description of the class of operator
algebras which retain the algebraic properties of von Neumann algebras was
natural. The most important achievement in this direction was made in 1951
by Kaplansky, who in \cite{Kaplansky51} introduced a class of $AW^{\ast }$%
-algebras, most successfully realizing the idea of an algebraic description
of the non-spatial theory of von Neumann algebras (i.e. of the part of it
which is not connected with the action of the elements of the algebra as
operators on the vectors of a Hilbert space). By Kaplansky's definition, $%
AW^{\ast }$-algebras are those $C^{\ast }$-algebras whose order structure is
the same as that of von Neumann algebras. This is especially intuitively
clear for commutative algebras. In this case both, the von Neumann algebras
and the $AW^{\ast }$-algebras are $^{\ast }$-isomorphic to the algebras of
continuous complex-valued functions on an extremely disconnected Hausdorff
(Stone) compact $X$; here for a von Neumann algebra one needs to add the
condition that $X$ be Hyperstonean. The analogous connection between $%
AW^{\ast }$-algebras and von Neumann algebras is also retained in general.
The class of $AW^{\ast }$-algebras occupies the same position in the class
of $C^{\ast }$-algebras as conditionally complete vector lattices in the
class of all vector lattices, and in particular, the order properties of $%
AW^{\ast }$-algebras to a large extent determine their algebraic structure.
The Baer $^{\ast }$-rings introduced by Kaplansky in \cite{Kaplansky51},
i.e., rings with involution, in which the right annihilator of any subset is
a principal right ideal, generated by a projection, appeared as a natural
generalization of $AW^{\ast }$-algebras and complete $^{\ast }$-regular
rings. In particular, $AW^{\ast }$-algebras are precisely those Baer $^{\ast
}$-rings which are simultaneously $C^{\ast }$-algebras; complete $^{\ast }$%
-regular rings are Baer $^{\ast }$-rings which are regular in the sense of
von Neumann.

On the other hand, the Hausdorff projective limits of projective families of
Banach algebras have been studied sporadically by many authors since 1952,
when they were first introduced by Arens \cite{Arens52} and Michael \cite%
{Michael52}. The Hausdorff projective limits of projective families of $%
C^{\ast }$-algebras were first mentioned by Arens \cite{Arens52}. They have
since been studied under various names by Wenjen \cite{Wenjen58}, Sya
Do-Shin \cite{Sya59}, Brooks \cite{Brooks71}, Inoue \cite{Inoue71}, Schm\"{u}%
dgen \cite{Schmudgen75}, Phillips \cite{Phillips88a}, \cite{Phillips88b}, to
name a few. Development of the subject is reflected in the recent monograph
of Fragoulopoulou \cite{Frago05}. We will follow Inoue \cite{Inoue71} in the
usage of the name \textbf{locally }$C^{\ast }$\textbf{-algebras} for these
algebras. The Hausdorff projective limits of projective families of $W^{\ast
}$- and von Neumann algebras (under the names of locally $W^{\ast }$- and
locally von Neumann algebras resp.) were introduced and studied by
Fragoulopoulou \cite{Frago86} and Joi\c{t}a \cite{Joita99}, \cite{Joita02},
where many known results from the spatial and non-spatial theory of von
Neumann algebras were successfully generalized.

In the view of aforementioned, it is therefore interesting to introduce and
study the Hausdorff projective limits of projective families of $AW^{\ast }$%
-algebras, as well as to consider Baer locally $C^{\ast }$-algebras; to
compare these two classes, and to extend the existing theory of $AW^{\ast }$%
-algebras to the locally multiplicatively-convex case. In the present paper
(first in a series of publications under preparation) we discuss the
definitions and basic properties of these algebras which we name \textbf{%
locally }$AW^{\ast }$\textbf{-algebras}.

\begin{remark}
Elsewhere we will publish: 

$\bullet $ A description of abelian locally $AW^{\ast }$-algebras; 

$\bullet $ Hilbert modules over locally $AW^{\ast }$-algebras; 

$\bullet $ Abstract characterization of locally von Neumann algebras within
the class of locally $AW^{\ast }$-algebras; 

$\bullet $ Non-commutation integration theory for locally $AW^{\ast }$%
-algebras; 

$\bullet $ Connections between locally $AW^{\ast }$-algebras and $O^{\ast }$%
-algebras of Sarymsakov and Goldstein (see \cite{SarymsakovGoldstein76}); 

$\bullet $ Connections between locally $AW^{\ast }$-algebras and $BO^{\ast }$%
-algebras of Chilin (see \cite{Chilin81}); 

$\bullet $ Real, Jordan and Lie structures in locally $AW^{\ast }$-algebras.
\end{remark}

\section{Preliminaries}

First, we recall some basic notions on topological $^{\ast }$-algebras. A $%
^{\ast }$-algebra (or involutive algebra) is an algebra $A$ over $%
\mathbb{C}
$ with an involution 
\begin{equation*}
^{\ast }:A\rightarrow A,
\end{equation*}
such that 
\begin{equation*}
(a+\lambda b)^{\ast }=a^{\ast }+\overline{\lambda }b^{\ast },
\end{equation*}
and 
\begin{equation*}
(ab)^{\ast }=b^{\ast }a^{\ast },
\end{equation*}%
for every $a,b\in A$ and $\lambda \in $ $%
\mathbb{C}
$.

A linear seminorm $\left\Vert .\right\Vert $ on a $^{\ast }$-algebra $A$ is
a $C^{\ast }$-seminorm if it is submultiplicative, i.e. 
\begin{equation*}
\left\Vert ab\right\Vert \leq \left\Vert a\right\Vert \left\Vert
b\right\Vert ,
\end{equation*}

and satisfies the $C^{\ast }$-condition, i.e. 
\begin{equation*}
\left\Vert a^{\ast }a\right\Vert =\left\Vert a\right\Vert ^{2},
\end{equation*}
for every $a,b\in A.$ Note that the $C^{\ast }$-condition alone implies that 
$\left\Vert .\right\Vert $ is submultiplicative, and in particular 
\begin{equation*}
\left\Vert a^{\ast }\right\Vert =\left\Vert a\right\Vert ,
\end{equation*}%
for every $a\in A$ (cf. for example \cite{Frago05}).

A topological $^{\ast }$-algebra is a $^{\ast }$-algebra $A$ equipped with a
topology making the operations (addition, multiplication, additive inverse,
involution) jointly continuous. For a topological $^{\ast }$-algebra $A$,
one puts $N(A)$ for the set of continuous $C^{\ast }$-seminorms on $A.$ One
can see that $N(A)$ is a directed set with respect to pointwise ordering,
because 
\begin{equation*}
\max \{\left\Vert .\right\Vert _{\alpha },\left\Vert .\right\Vert _{\beta
}\}\in N(A)
\end{equation*}%
for every $\left\Vert .\right\Vert _{\alpha },\left\Vert .\right\Vert
_{\beta }\in N(A),$ where $\alpha ,\beta \in \Lambda ,$ with $\Lambda $
being a certain directed set.

A $C^{\ast }$-algebra is a complete Hausdorff topological algebra whose
topology is given by a single $C^{\ast }$-norm. For a topological $^{\ast }$%
-algebra $A$, and $\left\Vert .\right\Vert _{\alpha }\in N(A),$ $\alpha \in
\Lambda $, 
\begin{equation*}
\ker \left\Vert .\right\Vert _{\alpha }=\{a\in A:\left\Vert a\right\Vert
_{\alpha }=0\}
\end{equation*}%
is a $^{\ast }$-ideal in $A$, and $\left\Vert .\right\Vert _{\alpha }$
induces a $C^{\ast }$-norm (we as well denote it by $\left\Vert .\right\Vert
_{\alpha }$) on the quotient $A/\ker \left\Vert .\right\Vert _{\alpha }$, so
the completion $A_{\alpha }$ of this quotient with respect to $\left\Vert
.\right\Vert _{\alpha }$ is a $C^{\ast }$-algebra. Each pair $\left\Vert
.\right\Vert _{\alpha },\left\Vert .\right\Vert _{\beta }\in N(A),$ such
that 
\begin{equation*}
\beta \succeq \alpha ,
\end{equation*}%
$\alpha ,\beta \in \Lambda ,$ induces a natural (continuous) surjective $%
^{\ast }$-homomorphism 
\begin{equation*}
g_{\alpha }^{\beta }:A_{\beta }\rightarrow A_{\alpha }.
\end{equation*}

Let, again, $\Lambda $ be a set of indices, directed by a relation
(reflexive, transitive, antisymmetric) $"\preceq "$. Let 
\begin{equation*}
\{A_{\alpha },\alpha \in \Lambda \}
\end{equation*}%
be a family of $C^{\ast }$-algebras, and $g_{\alpha }^{\beta }$ be, for 
\begin{equation*}
\alpha \preceq \beta ,
\end{equation*}%
the continuous linear $^{\ast }$-mappings 
\begin{equation*}
g_{\alpha }^{\beta }:A_{\beta }\longrightarrow A_{\alpha },
\end{equation*}%
so that 
\begin{equation*}
g_{\alpha }^{\alpha }(x_{\alpha })=x_{\alpha },
\end{equation*}%
for all $\alpha \in \Lambda ,$ and 
\begin{equation*}
g_{\alpha }^{\beta }\circ g_{\beta }^{\gamma }=g_{\alpha }^{\gamma },
\end{equation*}%
whenever 
\begin{equation*}
\alpha \preceq \beta \preceq \gamma .
\end{equation*}%
\ Let $\Gamma $ be the collections $\{g_{\alpha }^{\beta }\}$ of all such
transformations.\ Let $A$ be a $^{\ast }$-subalgebra of the direct product
algebra%
\begin{equation*}
\dprod\limits_{\alpha \in \Lambda }A_{\alpha },
\end{equation*}%
so that for its elements 
\begin{equation*}
x_{\alpha }=g_{\alpha }^{\beta }(x_{\beta }),
\end{equation*}%
for all%
\begin{equation*}
\alpha \preceq \beta ,
\end{equation*}%
where 
\begin{equation*}
x_{\alpha }\in A_{\alpha },
\end{equation*}%
and%
\begin{equation*}
x_{\beta }\in A_{\beta }.
\end{equation*}

\begin{definition}
The $^{\ast }$-algebra $A$ above is called a \textbf{Hausdorff} \textbf{%
projective limit} of the projective family 
\begin{equation*}
\{A_{\alpha },\alpha \in \Lambda \},
\end{equation*}%
relatively to the collection%
\begin{equation*}
\Gamma =\{g_{\alpha }^{\beta }:\alpha ,\beta \in \Lambda :\alpha \preceq
\beta \},
\end{equation*}%
and is denoted by%
\begin{equation*}
\underleftarrow{\lim }A_{\alpha },
\end{equation*}%
and called the Arens-Michael decomposition of $A$.
\end{definition}

It is well known (see, for example \cite{Treves67}) that for each $\beta \in
\Lambda $ there is a natural projection 
\begin{equation*}
\pi _{\beta }:A\longrightarrow A_{\beta },
\end{equation*}%
defined by 
\begin{equation*}
\pi _{\beta }(\{x_{\alpha }\})=x_{\beta },
\end{equation*}%
and each projection $\pi _{\alpha }$ for all $\alpha \in \Lambda $ is
continuous.

\begin{definition}
A topological $^{\ast }$-algebra $A$ over $\mathbb{C}$\ \ is called a $%
LC^{\ast }$\textbf{-algebra} or \textbf{locally }$C^{\ast }$\textbf{-algebra}
if there exists a projective family of $C^{\ast }$-algebras 
\begin{equation*}
\{A_{\alpha };g_{\alpha }^{\beta };\alpha ,\beta \in \Lambda \},
\end{equation*}%
so that 
\begin{equation*}
A\cong \underleftarrow{\lim }A_{\alpha },
\end{equation*}%
i.e. $A$ is topologically $^{\ast }$-isomorphic to a projective limit of a
projective family of $C^{\ast }$-algebras, i.e. there exits its
Arens-Michael decomposition composed of $C^{\ast }$-algebras.
\end{definition}

A topological $^{\ast }$-algebra $A$ over $\mathbb{C}$ is a locally $C^{\ast
}$-algebra iff $A$ is a complete Hausdorff topological $^{\ast }$-algebra in
which topology is generated by a saturated separating family of $C^{\ast }$%
-seminorms (see \cite{Frago05} for details).

\begin{example}
Every $C^{\ast }$-algebra is a real locally $C^{\ast }$-algebra.
\end{example}

\begin{example}
A closed $^{\ast }$-subalgebra of a locally $C^{\ast }$-algebra is a locally 
$C^{\ast }$-algebra.
\end{example}

\begin{example}
The product $\dprod\limits_{\alpha \in \Lambda }A_{\alpha }$ of $C^{\ast }$%
-algebras $A_{\alpha }$, with the product topology, is a locally $C^{\ast }$%
-algebra.
\end{example}

\begin{example}
Let $X$ be a compactly generated Hausdorff space (this means that a subset $%
Y\subset X$ is closed iff $Y\cap K$ is closed for every compact subset $%
K\subset X$). Then the algebra $C(X)$ of all continuous, not necessarily
bounded complex-valued functions on $X,$ with the topology of uniform
convergence on compact subsets, is a locally $C^{\ast }$-algebra. It is
known that all metrizable spaces and all locally compact Hausdorff spaces
are compactly generated.
\end{example}

Let $A$ be a locally $C^{\ast }$-algebra. Then an element $a\in A$ is called 
\textbf{bounded}, if 
\begin{equation*}
\left\Vert a\right\Vert _{\infty }=\{\sup \left\Vert a\right\Vert _{\alpha
},\alpha \in \Lambda :\left\Vert .\right\Vert _{\alpha }\in N(A)\}<\infty .
\end{equation*}%
The set of all bounded elements of $A$ is denoted by $b(A).$

It is well-known that for each locally $C^{\ast }$-algebra $A,$ its set $b(A)
$ of bounded elements of $A$ is a $C^{\ast }$-algebra in the norm $%
\left\Vert .\right\Vert _{\infty },$ which is dense in $A$ in its topology
(see for example \cite{Frago05}).

Let again now $A$ be an associative $^{\ast }$-algebra over $%
\mathbb{C}
$. For each nonempty subset $S$ of $A$, we denote by 
\begin{equation*}
R(S)=\{x\in A:sx=\mathbf{0},\text{ }\forall s\in S\}
\end{equation*}%
\begin{equation*}
\text{(resp., }L(S)=\{x\in A:xs=\mathbf{0},\text{ }\forall s\in S\}\text{)}
\end{equation*}%
the \textbf{right }(resp. \textbf{left}) \textbf{annihilator} of $S$ in $A$.
It is clear that 
\begin{equation*}
L(S)=(R(S^{\ast }))^{\ast },
\end{equation*}%
where 
\begin{equation*}
S^{\ast }=\{s^{\ast }:s\in S\}.
\end{equation*}

A $^{\ast }$-algebra $A$ is called a \textbf{Rickart }$^{\mathbf{\ast }}$%
\textbf{-algebra}, if for each $x\in A,$ there exists a projection (i.e., a
self-adjoint idempotent) $g$ of $A$, such that 
\begin{equation*}
R(\{x\})=gA.
\end{equation*}%
Obviously, in this case 
\begin{equation*}
L(\{x\})=Ae,
\end{equation*}

for some projection $e$ in $A$.

The projections $g$ and $e$ above are uniquely determined and are called,
respectively, the \textbf{right} and the \textbf{left annihilating
projections} for $x$. The right annihilating projection for zero element is
the identity in $A$ (we denote it by $\mathbf{1}$). The projection 
\begin{equation*}
r(x)=g^{\bot }=\mathbf{1}-g
\end{equation*}%
\begin{equation*}
(l(x)=e^{\bot }=\mathbf{1}-e)
\end{equation*}%
is called the \textbf{right} (resp. \textbf{left}) support of $x$.

In the set $\mathcal{P}(A)$ of all projectors of a Rickart $^{\ast }$%
-algebra $A$ one can introduce naturally the following partial ordering: 
\begin{equation*}
e\leq f,
\end{equation*}%
if 
\begin{equation*}
e=ef,
\end{equation*}%
$e,f\in \mathcal{P}(A).$

With respect to this partial ordering, $\mathcal{P}(A)$ is a lattice (see
for example \cite{Berberian72}), which is generally not complete, and not
even $\sigma $-complete (recall here that a lattice is called complete ($%
\sigma $-complete), if any subset (countable subset) of it has a supremum
and an infimum). If $A$ is a $RC^{\ast }$-algebra or a Rickart $C^{\ast }$%
-algebra (i.e., if $A$ is simultaneously a $C^{\ast }$-algebra and a Rickart 
$^{\ast }$-algebra), the right annihilator of any countable subset of $A$ is
generated by a projection (see \cite{Berberian72}), and thus, the lattice $%
\mathcal{P}(A)$ is $\sigma $-complete. If $\mathcal{P}(A)$ is a complete
lattice, then for any nonempty subset $S$ in $A$ its right annihilator has
the form 
\begin{equation*}
R(S)=g^{\bot },
\end{equation*}%
where 
\begin{equation*}
g=\sup \{r(s):s\in S\},
\end{equation*}%
i.e., $R(S)$ is generated by a projection.

A $^{\ast }$-algebra $A$ in which the right annihilator of each nonempty
subset $S$ is generated by a projection is called a \textbf{Baer }$^{\ast }$%
\textbf{-algebra}. Thus, for a $^{\ast }$-algebra $A$, the following two
conditions are equivalent:

1). A is a Baer $^{\ast }$-algebra;

2). A is a Rickart $^{\ast }$-algebra in which the lattice of projections is
complete.

One can note that in (2) it is sufficient to require that any family of
pairwise orthogonal projections has a supremum (see \cite{Berberian72} for
details). Recall that the projections $e$ and $f$ are called \textbf{%
orthogonal} if 
\begin{equation*}
ef=\mathbf{0}.
\end{equation*}

A $^{\ast }$-subalgebra $B$ of a Baer $^{\ast }$-algebra $A$ is called a 
\textbf{Baer }$^{\ast }$\textbf{-subalgebra} if for any nonempty subset $S$
of $B$, the right annihilator of a projection for $S$ in $A$ belongs to $B$.
Obviously, a Baer $^{\ast }$-subalgebra $B$ is itself a Baer $^{\ast }$%
-algebra, and $\mathcal{P}(B)$ is a regular sublattice of $\mathcal{P}(A),$
i.e., 
\begin{equation*}
\sup_{\mathcal{P}(B)}Q=\sup_{\mathcal{P}(A)}Q,
\end{equation*}%
and 
\begin{equation*}
\inf_{\mathcal{P}(B)}Q=\inf_{\mathcal{P}(A)}Q,
\end{equation*}%
for any subset $Q$ of $\mathcal{P}(B).$ Examples of Baer $^{\ast }$%
-subalgebras in a Baer $^{\ast }$-algebra $A$ are as follows:

a). $^{\ast }$-subalgebras of the form $eAe,$ $e\in \mathcal{P}(A)$;

b). commutants 
\begin{equation*}
S^{\prime }=\{x\in A:xs=sx,\text{ }\forall s\in S\}
\end{equation*}%
of self-adjoint subsets $S$ of $A$;

c). the center of $A$ (recall that the center of a $^{\ast }$-algebra is the
intersection of all of its maximal commutative $^{\ast }$-subalgebras, i.e.
the set of elements from $A$ that are pairwise commuting with all elements
of $A$).

A Baer $^{\ast }$-algebra which is simultaneously a $C^{\ast }$-algebra is
called an \textbf{AW}$^{\ast }$\textbf{-algebra}. The definition of an $%
AW^{\ast }-$algebra was introduced by Kaplansky in \cite{Kaplansky51} in a
different but equivalent (see \cite{Kaplansky68}) form.

\begin{definition}
An $AW^{\ast }$-algebra is a $C^{\ast }$-algebra which possesses the
following properties: 

$(i).$ in the partially ordered set of all its projections, each subset of
pairwise orthogonal projections has a supremum; 

$(ii).$ any maximal commutative $^{\ast }$-subalgebra is generated in the
norm topology by its projections (i.e., coincides with the smallest closed
in its norm topology $^{\ast }$-subalgebra containing its projections).
\end{definition}

From this definition it follows that in any $AW^{\ast }$-algebra the
following property is valid:

$(iii).$\textit{\ the set of self-adjoint elements of each maximal
commutative }$^{\ast }$\textit{-subalgebra forms a conditionally complete
vector lattice with respect to the induced partial order.}

In particular, if $Z$ is a commutative $C^{\ast }$-algebra, then, by
identifying $Z$ with the $^{\ast }$-algebra $C(X)$ of all continuous
complex-valued functions on the compact set $X$, we get that $Z$ is an $%
AW^{\ast }$-algebra, if and only if $X$ is an extremely completely
disconnected compact (i.e., the closure of any open subset of $X$ is open).
Thus, the class of $AW^{\ast }$-algebras coincides with the class of $%
C^{\ast }$-algebras for which the partial order has the additional
properties $(i)$ and $(iii)$ above.

A $C^{\ast }$-subalgebra of an $AW^{\ast }$-algebra which is at the same
time a Baer $^{\ast }$-subalgebra is called an $AW^{\ast }$\textbf{%
-subalgebra}. It is clear that an $AW^{\ast }$-subalgebra is itself an $%
AW^{\ast }$-algebra and in particular, any von Neumann algebra $M$ (or any $%
W^{\ast }$-algebra) is an $AW^{\ast }$-algebra. However, not all $AW^{\ast }$%
-algebras are $W^{\ast }$-algebras, even in the commutative case. Let us
mention the following example.

\begin{example}
Let $L$ be the complete Boolean algebra of all open regular subsets of the
interval $[0,1],$ and let $X(L)$ be the Stone extremely disconnected compact
corresponding to $L$, and let 
\begin{equation*}
Z\cong C(X(L)).
\end{equation*}%
Then $Z$ is a commutative $AW^{\ast }$-algebra, but not a $W^{\ast }$%
-algebra (see \cite{Chilin85} and \cite{Pedersen79} for details)
\end{example}

\section{Baer locally $C^{\ast }$-algebras, locally Kaplansky algebras and
locally $AW^{\ast }$-algebras}

\subsection{Definitions and basic properties.}

Using various approaches to locally $C^{\ast }$-algebras, mentioned above,
one can attempt to define a locally multiplicatively-convex generalization
of $AW^{\ast }$-algebras in a few different ways- through the construction
of the projective limit, by the Baer condition on annihilators of each
subset, or through the Kaplansky's conditions imposed on the maximal
commutative $^{\ast }$-subalgebras. We show below that in the case of a
local structure in Kaplansky algebras all these approaches lead to the same
class of topological $^{\ast }$-algebras.

Let $(A,\tau )$ be a topological $^{\ast }$-algebra over $%
\mathbb{C}
.$

\begin{definition}
We call $(A,\tau )$ a \textbf{Kaplansky topological }$^{\mathbf{\ast }}$%
\textbf{-algebra} if the following two conditions are satisfied:

$(i).$ in the partially ordered set of all its projections, each subset of
pairwise orthogonal projections has a supremum;

$(ii).$ any maximal commutative $^{\ast }$-subalgebra is generated in the
topology $\tau $ by its projections (i.e., coincides with the smallest
closed in its topology $\tau $ $^{\ast }$-subalgebra containing its
projections).
\end{definition}

Now we will establish locally multiplicatively-convex versions of a few
Kaplansky's Lemmata from \cite{Kaplansky51}

\begin{lemma}
Let $B$ be a commutative locally $C^{\ast }$-algebra, where 
\begin{equation*}
B\cong \underleftarrow{\lim }B_{\alpha },
\end{equation*}%
$\alpha \in \Lambda ,$ be its Arens-Michael decomposition into projective
limit of a projective family of commutative $C^{\ast }$-algebras, and $B$ be
generated in its projective topology by its projections. Let $x\in B$, and a
positive $\epsilon $ be given. Then there exists a projection $e$ in $B$,
which is a multiple of $x$ and satisfies inequality 
\begin{equation*}
\left\Vert x-ex\right\Vert _{\alpha }<\epsilon ,
\end{equation*}%
for each $\alpha \in \Lambda .$
\end{lemma}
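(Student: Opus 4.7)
The plan is to reduce to Kaplansky's original lemma \cite{Kaplansky51} in the component $C^{\ast}$-algebras $B_{\alpha}$, and then lift the resulting projection back to $B$ using the density of projections in the projective topology.

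First, for a fixed $\alpha \in \Lambda$, the continuous $^{\ast}$-homomorphism $\pi_{\alpha} : B \to B_{\alpha}$ has dense image, a standard feature of Arens--Michael decompositions. Since $B$ is generated in its projective topology by its projections and $\pi_{\alpha}$ is continuous and carries projections to projections, the commutative $C^{\ast}$-algebra $B_{\alpha}$ is itself generated in its norm topology by $\pi_{\alpha}(\mathcal{P}(B)) \subseteq \mathcal{P}(B_{\alpha})$. By Gelfand duality, $B_{\alpha} \cong C(X_{\alpha})$ with $X_{\alpha}$ a totally disconnected compact Hausdorff space. Applying Kaplansky's original lemma to $B_{\alpha}$ with input $x_{\alpha} := \pi_{\alpha}(x)$ then produces a projection $f_{\alpha} \in B_{\alpha}$ that is a multiple of $x_{\alpha}$ in $B_{\alpha}$ and satisfies $\left\Vert x_{\alpha} - f_{\alpha} x_{\alpha}\right\Vert_{\alpha} < \epsilon$; concretely, $f_{\alpha}$ is the indicator of a clopen subset of $X_{\alpha}$ on which $|x_{\alpha}|$ is bounded below by a positive constant and off which $|x_{\alpha}| < \epsilon$.

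For the construction of $e \in B$, I would use the projection-density hypothesis directly in $B$: approximate $x$ in the $\alpha$-seminorm by a finite $\mathbb{C}$-linear combination $y = \sum_{j=1}^{n} c_{j} p_{j}$ of pairwise orthogonal projections $p_{j} \in \mathcal{P}(B)$, with $\left\Vert x - y\right\Vert_{\alpha} < \epsilon/6$. Orthogonalization is possible because the commutativity of $B$ endows $\mathcal{P}(B)$ with a Boolean lattice structure (in the Gelfand picture the $p_{j}$'s correspond to clopen subsets of the spectrum of $B$, which can be refined to a disjoint family generating the same Boolean subalgebra). I would then set
\begin{equation*}
e := \sum_{|c_{j}| \geq \epsilon/2} p_{j} \in \mathcal{P}(B),
\end{equation*}
which is a projection by orthogonality. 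A three-term triangle inequality, combined with the identity $\left\Vert \sum_{j} d_{j} p_{j}\right\Vert_{\alpha} = \max_{j} |d_{j}|\left\Vert p_{j}\right\Vert_{\alpha}$ valid for pairwise orthogonal projections in a $C^{\ast}$-seminorm, then yields $\left\Vert x - e x\right\Vert_{\alpha} < \epsilon$.

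The main obstacle is the verification that this $e$ is a multiple of $x$ in $B$ rather than merely in $B_{\alpha}$: the approximation only guarantees that $|x|$ is bounded below by a positive constant on the support of each contributing $p_{j}$ \emph{in the $\alpha$-seminorm}, whereas continuous functional calculus in the locally $C^{\ast}$-algebra $B$ requires this lower bound to hold globally in order to produce a multiplier $b \in B$ with $bx = e$. I expect to resolve this by refining the $p_{j}$'s using the projection-generation hypothesis across all seminorms -- replacing each relevant $p_{j}$ by a subprojection on which $|x|$ admits a uniform global lower bound -- and verifying that the remaining contribution to $(1 - e)x$ is still controlled in the $\alpha$-seminorm by the original approximation.
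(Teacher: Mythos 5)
There is a genuine gap, and the more serious one is not the one you flag. The lemma asks for a \emph{single} projection $e$ satisfying $\left\Vert x-ex\right\Vert _{\alpha }<\epsilon $ for \emph{every} $\alpha \in \Lambda $. Your actual construction (third paragraph) fixes one $\alpha $, approximates $x$ by $y=\sum_{j}c_{j}p_{j}$ in that one seminorm, and builds $e$ from the coefficients $c_{j}$; the resulting $e$ depends on $\alpha $, and nothing in the argument controls $\left\Vert x-ex\right\Vert _{\beta }$ for an index $\beta $ with $\left\Vert \cdot \right\Vert _{\beta }\geq \left\Vert \cdot \right\Vert _{\alpha }$. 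The estimate only passes downward, to seminorms dominated by $\left\Vert \cdot \right\Vert _{\alpha }$, and $\Lambda $ has no largest element in general, so you cannot escape by choosing $\alpha $ large. This is precisely the point where the projective-limit structure must do real work, and your proposal does not use it there: the first two paragraphs, which correctly set up the quotients $B_{\alpha }$ and verify that Kaplansky's classical lemma applies inside each of them, produce projections $f_{\alpha }\in B_{\alpha }$ that are never used afterwards.

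The second gap is the one you name yourself, and the proposed repair is not carried out and is not obviously executable: shrinking each contributing $p_{j}$ to a subprojection on which $\left\vert x\right\vert $ admits a global lower bound is not shown to terminate in a nonzero projection, and any shrinking degrades the approximation $\left\Vert x-y\right\Vert _{\alpha }<\epsilon /6$ on which the whole estimate rests. In Kaplansky's original argument the multiple property comes not from pointwise lower bounds but from the invertibility of $exe$ in the corner $eBe$; transplanting that to the present setting again requires simultaneous control in all seminorms. For comparison, the paper argues in the opposite direction: it applies Kaplansky's lemma in each quotient $B_{\alpha }$ to obtain $e_{\alpha }$ and then assembles these into a single thread $e$ with $\pi _{\alpha }(e)=e_{\alpha }$, which by construction handles all $\alpha $ at once; the delicate point there, which the paper passes over in silence, is why the $e_{\alpha }$ can be chosen coherently, i.e.\ with $g_{\alpha }^{\beta }(e_{\beta })=e_{\alpha }$ for $\alpha \preceq \beta $. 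Any complete proof must address that compatibility, and your lift back to $B$ announced in the first paragraph would need it as well.
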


\begin{proof}
For a given $\alpha \in \Lambda ,$ let 
\begin{equation*}
x_{\alpha }=\pi _{\alpha }(x)\in B_{\alpha }.
\end{equation*}%
But $B_{\alpha }$ satisfies the conditions of the classical Lemma of
Kaplansky from \cite{Kaplansky51}, and thus there exists in $B_{\alpha }$ a
projection $e_{\alpha },$ such that 
\begin{equation*}
\left\Vert x_{\alpha }-e_{\alpha }x_{\alpha }\right\Vert _{\alpha }<\epsilon
.
\end{equation*}%
Let us now consider a unique element $e$ in $B$ such that 
\begin{equation*}
e=\pi _{\alpha }(e_{\alpha })
\end{equation*}%
One can easily check that $e$ is a projection in $B$ which satisfies the
conditions of the Lemma.
\end{proof}

\begin{lemma}
Let A be a locally $C^{\ast }$-algebra with its Arens-Michael decomposition 
\begin{equation*}
A\cong \underleftarrow{\lim }A_{\alpha },
\end{equation*}%
$\alpha \in \Lambda ,$ into projective limit of projective family of $%
C^{\ast }$-algebras $A_{\alpha },$ such that each of its maximal commutative 
$^{\ast }$-subalgebra is generated in its projective topology by its
projections. Let $e_{\lambda },$ $\lambda \in \Gamma ,$ be a family of
pairwise orthogonal projections in $A$ with its supremum $e$ in $A$. Then:

(a). 
\begin{equation*}
xe_{\lambda }=\mathbf{0,}
\end{equation*}%
for all $\lambda \in \Gamma ,$ implies 
\begin{equation*}
xe=\mathbf{0};
\end{equation*}

(b). 
\begin{equation*}
e_{\lambda }x=xe_{\lambda },
\end{equation*}%
for all $\lambda \in \Gamma ,$ implies 
\begin{equation*}
ex=xe.
\end{equation*}
\end{lemma}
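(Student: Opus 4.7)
The plan is to imitate Kaplansky's 1951 argument, carried out inside a maximal commutative $*$-subalgebra of $A$ on which Lemma 1 can be invoked. For part (a), I set $y := ex^{\ast}xe$. Then $y\geq 0$, a direct computation gives $ey=ye=y$, and the hypothesis $xe_{\lambda}=\mathbf{0}$ together with its adjoint $e_{\lambda}x^{\ast}=\mathbf{0}$ yields $ye_{\lambda}=e_{\lambda}y=\mathbf{0}$ for every $\lambda\in\Gamma$. Hence $\{y,e\}\cup\{e_{\lambda}:\lambda\in\Gamma\}$ is a commuting self-adjoint family, which by Zorn's lemma extends to a maximal commutative $*$-subalgebra $M$ of $A$; maximality and joint continuity of the algebra operations force $M$ to be closed, so $M$ is itself a commutative locally $C^{\ast}$-algebra (Example 2) which, by hypothesis, is generated in its induced projective topology by its projections.

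I then apply Lemma 1 to $y\in M$ with an arbitrary $\epsilon>0$, obtaining a projection $f=gy\in M$ (with $g\in M$) such that $\Vert y-fy\Vert_{\alpha}<\epsilon$ for every $\alpha\in\Lambda$. From $ye_{\lambda}=\mathbf{0}$ I get $fe_{\lambda}=g(ye_{\lambda})=\mathbf{0}$, and from $ye=y$ I get $fe=g(ye)=gy=f$, so $f\leq e$ and $f$ is orthogonal to every $e_{\lambda}$. Consequently $f+e_{\lambda}$ is a projection satisfying $(f+e_{\lambda})e=f+e_{\lambda}$, i.e.\ $f+e_{\lambda}\leq e$, equivalently $e_{\lambda}\leq e-f$ for every $\lambda$. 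Since $e=\sup_{\lambda}e_{\lambda}$ this forces $e\leq e-f$, hence $f=\mathbf{0}$. But then $\Vert y\Vert_{\alpha}<\epsilon$ for every $\alpha$ and every $\epsilon>0$, so $y=\mathbf{0}$; the $C^{\ast}$-identity at the level of each quotient $A_{\alpha}$ then gives $\Vert xe\Vert_{\alpha}^{2}=\Vert(xe)^{\ast}(xe)\Vert_{\alpha}=\Vert y\Vert_{\alpha}=0$, whence $xe=\mathbf{0}$.

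Part (b) reduces to (a) via the standard commutator trick: set $y:=ex-xe$ and use $ee_{\lambda}=e_{\lambda}e=e_{\lambda}$ together with the hypothesis $e_{\lambda}x=xe_{\lambda}$ to verify $ye_{\lambda}=\mathbf{0}$ and $e_{\lambda}y=\mathbf{0}$ for every $\lambda$. Part (a) applied to $y$ yields $ye=\mathbf{0}$, i.e.\ $exe=xe$; the adjoint version of (a) (obtained by replacing $x$ with $x^{\ast}$ and using $e_{\lambda}^{\ast}=e_{\lambda}$, which states that $e_{\lambda}z=\mathbf{0}$ for all $\lambda$ implies $ez=\mathbf{0}$) applied to $z=y$ gives $ey=\mathbf{0}$, i.e.\ $ex=exe$; combining the two identities yields $ex=xe$. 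The step I expect to require most care is the verification that the chosen $M$ genuinely inherits both the locally $C^{\ast}$ structure and the Kaplansky generation condition from $A$, so that Lemma 1 applies verbatim inside $M$; the remainder is routine projection-lattice bookkeeping.
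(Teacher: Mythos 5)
Your proof is correct, but it takes a genuinely different route from the paper's. The paper argues fiberwise: for each $\alpha$ it pushes the family $\{e_{\lambda}\}$ and its supremum $e$ down to $A_{\alpha}$ via $\pi_{\alpha}$, asserts that $\pi_{\alpha}(e)=\sup_{\lambda}\pi_{\alpha}(e_{\lambda})$ in $A_{\alpha}$ and that each $A_{\alpha}$ satisfies the hypotheses of Kaplansky's classical lemma, and concludes $\pi_{\alpha}(xe)=\mathbf{0}$ for all $\alpha$, hence $xe=\mathbf{0}$. You instead reproduce Kaplansky's original intrinsic argument inside $A$ itself: you place $y=ex^{\ast}xe$ (resp.\ the commutator $ex-xe$) together with $e$ and the $e_{\lambda}$ in a maximal commutative $^{\ast}$-subalgebra $M$, observe that $M$ is closed and hence a commutative locally $C^{\ast}$-algebra falling under the lemma's hypothesis on maximal commutative $^{\ast}$-subalgebras, and then use the multiple-of-$y$ projection $f$ furnished by Lemma 1 to show $f\leq e$, $f\perp e_{\lambda}$ for all $\lambda$, hence $e\leq e-f$, $f=\mathbf{0}$, and finally $\left\Vert y\right\Vert _{\alpha}<\epsilon$ for all $\alpha$ and all $\epsilon$, so $y=\mathbf{0}$ and $\left\Vert xe\right\Vert _{\alpha}^{2}=\left\Vert y\right\Vert _{\alpha}=0$. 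Your route buys something real: it sidesteps the two points the paper's fiberwise proof leaves unjustified, namely that the natural projections preserve suprema of orthogonal families of projections and that the quotient algebras $A_{\alpha}$ inherit the Kaplansky generation condition; neither is obvious for an arbitrary Arens-Michael decomposition under the stated hypotheses, whereas your argument never needs them. The remaining steps --- the commutativity of $\{y,e\}\cup\{e_{\lambda}\}$, the identities $fe_{\lambda}=\mathbf{0}$ and $fe=f$, and the reduction of (b) to (a) together with its adjoint version --- all check out.
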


\begin{proof}
(a). One can easily see that for a given $\alpha \in \Lambda ,$ $\pi
_{\alpha }(e_{\lambda }),$ $\lambda \in \Gamma ,$ is a family of pairwise
orthogonal projections in $A_{\alpha }$ with its supremum $\pi _{\alpha }(e)$
in $A_{\alpha }$. Thus, by applying the classical Kaplansky's Lemma from 
\cite{Kaplansky51} to $A_{\alpha },$ we get that 
\begin{equation*}
\pi _{\alpha }(x)\pi _{\alpha }(e)=\pi _{\alpha }(xe)=\mathbf{0}_{\alpha
}=\pi _{\alpha }(\mathbf{0}),
\end{equation*}%
for all $\alpha \in \Lambda ,$ thus 
\begin{equation*}
xe=\mathbf{0.}
\end{equation*}

(b). Analogously, one can easily see that for a given $\alpha \in \Lambda ,$ 
$\pi _{\alpha }(e_{\lambda }),$ $\lambda \in \Gamma ,$ is a family of
pairwise orthogonal projections in $A_{\alpha }$ with its supremum $\pi
_{\alpha }(e)$ in $A_{\alpha }$. Thus, by applying the classical Kaplansky's
Lemma from \cite{Kaplansky51} to $A_{\alpha },$ we get that 
\begin{equation*}
\pi _{\alpha }(x)\pi _{\alpha }(e)=\pi _{\alpha }(xe)=\pi _{\alpha }(ex)=\pi
_{\alpha }(e)\pi _{\alpha }(x),
\end{equation*}%
for all $\alpha \in \Lambda ,$ thus 
\begin{equation*}
xe=ex\mathbf{.}
\end{equation*}
\end{proof}

The following Theorem is valid:

\begin{theorem}
For a locally $C^{\ast }$-algebra $A$ the following conditions are
equivalent:

1). $A$ is a Baer $^{\ast }$-algebra;

2). $A$ is a Kaplansky topological $^{\ast }$-algebra;

3). there exists an Arens-Michael decomposition of $A$%
\begin{equation*}
A\cong \underleftarrow{\lim }A_{\alpha },
\end{equation*}%
such that all $A_{\alpha }$ are $AW^{\ast }$-algebras for each $\alpha \in
\Lambda ;$

4). all Arens-Michael decompositions of $A$ 
\begin{equation*}
A\cong \underleftarrow{\lim }A_{\alpha },
\end{equation*}%
are such that $A_{\alpha }$ are $AW^{\ast }$-algebras for each $\alpha \in
\Lambda .$
\end{theorem}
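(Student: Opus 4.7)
The plan is to prove the cyclic implication $(4)\Rightarrow (3)\Rightarrow (1)\Rightarrow (2)\Rightarrow (4)$, yielding equivalence of all four conditions. The implication $(4)\Rightarrow (3)$ is immediate, since $A$ admits at least one Arens-Michael decomposition.

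For $(3)\Rightarrow (1)$, fix an Arens-Michael decomposition $A\cong \underleftarrow{\lim} A_{\alpha}$ in which every $A_{\alpha}$ is an $AW^{\ast}$-algebra. For a nonempty $S\subset A$, let $g_{\alpha}\in A_{\alpha}$ be the projection with $R_{A_{\alpha}}(\pi_{\alpha}(S))=g_{\alpha}A_{\alpha}$, guaranteed by the Baer property of $A_{\alpha}$. I would then verify the compatibility $g_{\alpha}^{\beta}(g_{\beta})=g_{\alpha}$ for $\alpha\preceq\beta$: the inclusion $g_{\alpha}^{\beta}(g_{\beta})\leq g_{\alpha}$ follows from $\pi_{\alpha}(S)=g_{\alpha}^{\beta}(\pi_{\beta}(S))$ together with the fact that $g_{\alpha}^{\beta}$ is a surjective $\ast$-homomorphism, while the reverse inclusion uses the specific structure of $\ker g_{\alpha}^{\beta}$ inside the Arens-Michael tower. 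Once compatibility is established, $g:=(g_{\alpha})\in \underleftarrow{\lim} A_{\alpha}=A$ is a projection, and a direct check yields $R_{A}(S)=gA$, so $A$ is Baer.

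For $(1)\Rightarrow (2)$, Kaplansky's condition (i) is a classical algebraic consequence of the Baer property: if $\{e_{\lambda}\}$ is a pairwise orthogonal family of projections and $R_{A}(\{e_{\lambda}\})=gA$, then $\mathbf{1}-g$ is the supremum of the family. For condition (ii), a maximal commutative $\ast$-subalgebra $M\subset A$ is the commutant of a self-adjoint subset, hence a Baer $\ast$-subalgebra of $A$; as a commutative locally $C^{\ast}$-algebra, its Arens-Michael components are commutative $AW^{\ast}$-algebras (of the form $C(X_{\alpha})$ for Stonean $X_{\alpha}$), each norm-generated by its clopen projections via the classical commutative Kaplansky theorem. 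Lemma 1 packages this componentwise generation into generation of $M$ in the projective topology.

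For $(2)\Rightarrow (4)$, fix any Arens-Michael decomposition $A\cong \underleftarrow{\lim} A_{\alpha}$ and verify Kaplansky's conditions in each $A_{\alpha}$. For (i) in $A_{\alpha}$, given pairwise orthogonal projections $\{p_{\lambda}^{\alpha}\}$, lift them to a pairwise orthogonal family in $A$ using surjectivity of $\pi_{\alpha}$ together with projection-lifting along the surjective $\ast$-homomorphism, form the supremum $e\in A$ via (2)(i), and invoke Lemma 2(a) to identify $\pi_{\alpha}(e)$ as the supremum of $\{p_{\lambda}^{\alpha}\}$ in $A_{\alpha}$. For (ii) in $A_{\alpha}$, each maximal commutative $\ast$-subalgebra $N_{\alpha}\subset A_{\alpha}$ is the image under $\pi_{\alpha}$ of a suitable maximal commutative $\ast$-subalgebra of $A$, whose projection-generation guaranteed by (2)(ii) transports to $A_{\alpha}$ by continuity and density of the image of $\pi_{\alpha}$. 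The main obstacle throughout is precisely this two-way passage between $A$ and its projective components---compatibility of annihilating projections under $g_{\alpha}^{\beta}$ in $(3)\Rightarrow (1)$, and lifting of pairwise orthogonal projections through $\pi_{\alpha}$ in $(2)\Rightarrow (4)$---and Lemmata 1 and 2 are the designated tools for carrying these passages out.
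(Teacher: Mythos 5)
Your cycle $(4)\Rightarrow(3)\Rightarrow(1)\Rightarrow(2)\Rightarrow(4)$ differs from the paper's route (the paper proves $1\Rightarrow 4$, $4\Rightarrow 3$, $3\Rightarrow 1$, $3\Rightarrow 2$ and $2\Rightarrow 1$), and the two links you add are precisely the ones that break. In $(1)\Rightarrow(2)$, for Kaplansky's condition (ii) you reduce to the claim that the Arens--Michael components of the Baer commutative locally $C^{\ast}$-algebra $M$ are commutative $AW^{\ast}$-algebras. That claim is an instance of the implication $(1)\Rightarrow(4)$ (applied to $M$), which in your ordering is only obtained by first passing through $(2)$; as written the argument is circular. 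The paper sidesteps this by proving $1\Rightarrow 4$ directly and independently: for a subset $S_{\alpha}\subset A_{\alpha}$ it pulls back to $\pi_{\alpha}^{-1}(S_{\alpha})\subset A$, applies the Baer property of $A$ there, and pushes the resulting annihilating projection forward, so no statement about the components of $M$ is ever needed.

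Second, in $(2)\Rightarrow(4)$ you propose to lift a pairwise orthogonal family of projections from $A_{\alpha}$ to $A$ ``using projection-lifting along the surjective $\ast$-homomorphism.'' Projections do not in general lift along surjective $\ast$-homomorphisms of $C^{\ast}$-algebras (consider evaluation $C[0,1]\rightarrow \mathbb{C}\oplus \mathbb{C}$ at the two endpoints: the target has nontrivial projections, the source does not), let alone entire families lifted so as to remain pairwise orthogonal; and the companion claim that every maximal commutative $\ast$-subalgebra of $A_{\alpha}$ is the image under $\pi_{\alpha}$ of one in $A$ is equally unsubstantiated, since preimages of commutative subalgebras need not be commutative. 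Neither step occurs in the paper, which only ever pushes projections forward from $A$ to $A_{\alpha}$, never backward. Your $(3)\Rightarrow(1)$ is essentially the paper's argument, and you deserve credit for flagging the compatibility $g_{\alpha}^{\beta}(g_{\beta})=g_{\alpha}$ that the paper silently assumes, but ``uses the specific structure of $\ker g_{\alpha}^{\beta}$'' is a placeholder, not a proof. To repair the whole scheme, reorder as the paper does: establish $1\Rightarrow 4$ by the pullback trick above, and establish $2\Rightarrow 1$ directly by the Zorn's-lemma argument with a maximal orthogonal family of projections in the annihilator, using Lemmata 1 and 2, so that no projection ever needs to be lifted against $\pi_{\alpha}$.
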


\begin{proof}
To prove the implication $1\Longrightarrow 4,$ let us consider $A$ to be a
Baer locally $C^{\ast }$-algebra, and 
\begin{equation*}
A\cong \underleftarrow{\lim }A_{\alpha },
\end{equation*}%
$\alpha \in \Lambda ,$ be its arbitrary Arens-Michael decomposition into
projective limit of the projective family of $C^{\ast }$-algebras $A_{\alpha
},$ $\alpha \in \Lambda ,$ which is always exists because $A$ is a locally $%
C^{\ast }$-algebra. Let us show that all $A_{\alpha },$ $\alpha \in \Lambda
, $ are Baer, and thus, all are $AW^{\ast }$-algebras. In fact, let us fix $%
\alpha \in \Lambda ,$ and let $S_{\alpha }$ be an arbitrary subset of $%
A_{\alpha }.$ Let us consider now a subset $S$ in $A$, such that 
\begin{equation*}
S=\{x\in A:\pi _{\alpha }(x)\in S_{\alpha }\},
\end{equation*}%
where $\pi _{\alpha }$ is the natural projection from $A$ to $A_{\alpha }$.
Because $A$ is a Baer $^{\ast }$-algebra, there exists a projection 
\begin{equation*}
e\in A,
\end{equation*}%
such that the right annihilator $R_{A}(S)$ of $S$ in $A$ 
\begin{equation*}
R_{A}(S)=eA.
\end{equation*}%
Now, let us consider an element 
\begin{equation*}
e_{\alpha }=\pi _{\alpha }(e)
\end{equation*}%
in $A_{\alpha }.$ It is easy to check that $e_{\alpha }$ is a projection in $%
A_{\alpha }.$ But now, one can see that the right annihilator $R_{A_{\alpha
}}(S_{\alpha })$ of $S_{\alpha }$ in $A_{\alpha }$ is such that 
\begin{equation*}
R_{A_{\alpha }}(S_{\alpha })=\pi _{\alpha }(R_{A}(S))=\pi _{\alpha }(eA)=\pi
_{\alpha }(e)\pi _{\alpha }(A)=e_{\alpha }A_{\alpha },
\end{equation*}%
and the proof of implication is completed.

The implication $4\Longrightarrow 3$ is obvious.

To prove the implication $3\Longrightarrow 1,$ let us consider a locally $%
C^{\ast }$-algebra $A$, for which there exists an Arens-Michael
decomposition 
\begin{equation*}
A\cong \underleftarrow{\lim }A_{\alpha },
\end{equation*}%
such that all $A_{\alpha }$ are $AW^{\ast }$-algebras for each $\alpha \in
\Lambda .$ Let us consider an arbitrary subset $S$ of $A$. For each $\alpha
\in \Lambda ,$ 
\begin{equation*}
S_{\alpha }=\pi _{\alpha }(S)\subset A_{\alpha }.
\end{equation*}%
Because A$_{\alpha }$ is a Baer $^{\ast }$-algebra, there exists a unique
projection $e_{\alpha }$ in $A_{\alpha },$ such that the right annihilator $%
R_{A_{\alpha }}(S_{\alpha })$ of $S_{\alpha }$ in $A_{\alpha }$ is such that 
\begin{equation*}
R_{A_{\alpha }}(S_{\alpha })=e_{\alpha }A_{\alpha },
\end{equation*}%
for each $\alpha \in \Lambda .$ Let us now consider a unique element $e$ in $%
A$, such that 
\begin{equation*}
\pi _{\alpha }(e)=e_{\alpha },
\end{equation*}%
for each $\alpha \in \Lambda .$ Because each element $e_{\alpha }$ is a
projection in $A_{\alpha },$ for each $\alpha \in \Lambda ,$ one can easily
check that $e$ is a projection in $A$, and the right annihilator $R_{A}(S)$
of $S$ in $A$ is such that 
\begin{equation*}
R_{A}(S)=eA,
\end{equation*}%
and the proof of implication is completed.

To prove the implication $3\Longrightarrow 2,$ let us consider $A$ to be a
locally $C^{\ast }$-algebra, so that there exists an Arens-Michael
decomposition of $A$%
\begin{equation*}
A\cong \underleftarrow{\lim }A_{\alpha },
\end{equation*}%
such that all $A_{\alpha }$ are $AW^{\ast }$-algebras for each $\alpha \in
\Lambda .$

On the one hand, let now $e_{\lambda },$ $\lambda \in \Gamma ,$ where $%
\Gamma $ is a directed set, be a subset of pairwise orthogonal projections
in $A$. For every fixed $\alpha \in \Lambda ,$ let us consider the set of
elements $\pi _{\alpha }(e_{\lambda }),$ $\lambda \in \Gamma $. Because all $%
e_{\lambda },$ $\lambda \in \Gamma ,$ are the subset of pairwise orthogonal
projections in $A,$ the subset of elements $\pi _{\alpha }(e_{\lambda }),$ $%
\lambda \in \Gamma ,$ will be a subset of pairwise orthogonal projections in 
$A_{\alpha }.$ But the algebra $A_{\alpha }$ is an $AW^{\ast }$-algebra, and
thus, for every $\alpha \in \Lambda ,$ there exists a projection $p_{\alpha
} $ in $A_{\alpha },$ such that 
\begin{equation*}
p_{\alpha }=\sup_{\lambda \in \Gamma }\{\pi _{\alpha }(e_{\lambda })\}.
\end{equation*}%
Let us now consider a unique element $p$ in $A$, such that 
\begin{equation*}
\pi _{\alpha }(p)=p_{\alpha },
\end{equation*}%
for each $\alpha \in \Lambda .$ One can easily check now that $p$ is a
projection in $A$, and 
\begin{equation*}
p=\sup_{\lambda \in \Gamma }\{e_{\lambda }\}.
\end{equation*}

On the other hand, let $B$ be the maximal commutative $^{\ast }$-subalgebra
of $A.$ Firstly, one can easily see that for each $\alpha \in \Lambda ,$ 
\begin{equation*}
B_{\alpha }=\pi _{\alpha }(B)
\end{equation*}%
is a maximal commutative $^{\ast }$-subalgebra of $A_{\alpha },$ and thus,
because each $A_{\alpha }$ is an $AW^{\ast }$-algebra, $B_{\alpha }$ is
generated in its norm topology by its projections. Now, on the contrary
assume that $B$ is not generated by its projections in its projective
topology. Thus, there exists an element $x$ in $B$, and an open neighborhood 
$O_{x}$ of $x$ in $B$ in the projective topology of $A$, which doesn't
contain a single projection. Let us now, for each $\alpha \in \Lambda ,$
consider a subset $\pi _{\alpha }(O_{x})$ in $B_{\alpha }.$ Because $O_{x}$
is an open neighborhood of $x$ in the projective topology of $B$, $\pi
_{\alpha }(O_{x})$ is an open neighborhood of an element $\pi _{\alpha }(x)$
in $B_{\alpha }$ in its norm topology. And because the subalgebra $B_{\alpha
}$ is generated in its norm topology by its projections, there exists a
projection in $\pi _{\alpha }(O_{x}),$ for each $\alpha \in \Lambda .$ Let
us now consider an element $e$ in $B$, such that 
\begin{equation*}
\pi _{\alpha }(e)=e_{\alpha },
\end{equation*}%
for each $\alpha \in \Lambda .$ One can easily check that $e$ is a
projection in $B$, and 
\begin{equation*}
e\in O_{x}.
\end{equation*}%
Similarly, one obtains a contradiction with the assumption that $O_{x}$ does
not contain a linear combination of a family of projections from $B$.
Obtained contradiction completes the proof of current implication.

To prove the implication $2\Longrightarrow 1,$ let us consider a locally $%
C^{\ast }$-algebra $A$ which is as well a Kaplansky topological $^{\ast }$%
-algebra. Let $S$ be any subset in $A$. By the usage of Zorn's Lemma, let us
select a maximal family of pairwise orthogonal projections $e_{\lambda
},\lambda \in \Gamma ,$ in $S$, and let a projection $e$ in $A$ be its
supremum. From Lemma 2 above it follows that $e\in R(S).$ To prove that 
\begin{equation*}
R(S)=eA,
\end{equation*}%
we take $y\in R(S)$ and have to show that 
\begin{equation*}
x=y-ey=\mathbf{0}.
\end{equation*}%
On a contrary, let us assume that 
\begin{equation*}
x\neq \mathbf{0}.
\end{equation*}%
Then, the element $xx^{\ast }$ will be as well non-zero and we can apply to
it the Lemma XX. Thus, there exists a projection $p$ in $A$, which is a
two-sided multiple of $xx^{\ast }$. Then $p$ (together with $x$) as well
belongs to $R(S)$, and 
\begin{equation*}
ex=\mathbf{0},
\end{equation*}%
implies that 
\begin{equation*}
ep=\mathbf{0},
\end{equation*}%
and 
\begin{equation*}
e_{\lambda }p=\mathbf{0},
\end{equation*}%
for all $\lambda \in \Gamma ,$ which contradicts the maximality of the
family $e_{\lambda },\lambda \in \Gamma .$

The proof of the Theorem is now completed.
\end{proof}

In a view of Theorem 1 above we now have the following definition.

\begin{definition}
A locally $C^{\ast }$-algebra which satisfies one (and thus all) of the
conditions of Theorem 1 above is called a \textbf{locally AW}$^{\mathbf{\ast 
}}$\textbf{-algebra} or \textbf{LAW}$^{\ast }$\textbf{-algebra}.
\end{definition}

As immediate corollaries of Theorem 1 we obtain the following Propositions:

\begin{proposition}
Each locally von Neumann -algebra (locally $W^{\ast }$-algebra) is a locally 
$AW^{\ast }$-algebra.
\end{proposition}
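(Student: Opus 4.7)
The plan is to reduce the statement directly to condition (3) of Theorem 1, which characterizes locally $AW^{\ast}$-algebras as those locally $C^{\ast}$-algebras admitting \emph{some} Arens-Michael decomposition whose components are $AW^{\ast}$-algebras. So the strategy is: take the defining Arens-Michael decomposition of a locally von Neumann algebra (or locally $W^{\ast}$-algebra) and observe that every component of that decomposition is already an $AW^{\ast}$-algebra.

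Concretely, I would proceed as follows. First, I would invoke the definitions of Fragoulopoulou \cite{Frago86} and Joi\c{t}a \cite{Joita99}, \cite{Joita02} recalled in the Introduction: a locally $W^{\ast}$-algebra (respectively, a locally von Neumann algebra) $A$ is by construction a Hausdorff projective limit $A\cong\underleftarrow{\lim}A_{\alpha}$, $\alpha\in\Lambda$, where each $A_{\alpha}$ is a $W^{\ast}$-algebra (respectively, a von Neumann algebra). In particular, $A$ is a locally $C^{\ast}$-algebra, so it is an admissible object of Theorem 1.

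Next, I would quote the classical fact, explicitly reproduced in the preliminaries of this paper, that every $W^{\ast}$-algebra (and a fortiori every von Neumann algebra) is an $AW^{\ast}$-algebra. Consequently, each $A_{\alpha}$ in the Arens-Michael decomposition above is an $AW^{\ast}$-algebra. This is precisely condition (3) of Theorem 1 for $A$, and the equivalence of conditions (1)--(4) established there lets me conclude that $A$ is a locally $AW^{\ast}$-algebra, completing the argument.

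There is essentially no main obstacle: once Theorem 1 is available, the proposition is a one-line corollary obtained by matching the definition of locally $W^{\ast}$-algebra against criterion (3). The only minor point to be careful about is distinguishing between \emph{some} Arens-Michael decomposition (which suffices by (3)) and \emph{all} such decompositions (guaranteed a posteriori by (4)); since the defining decomposition is already of the right form, condition (3) is the appropriate one to invoke.
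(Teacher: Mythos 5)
Your proposal is correct and follows exactly the paper's own argument: the paper likewise takes the defining Arens-Michael decomposition of a locally von Neumann ($W^{\ast}$-) algebra into von Neumann ($W^{\ast}$-) algebras, notes that each such component is an $AW^{\ast}$-algebra, and concludes via the equivalence in Theorem 1. Your remark about invoking condition (3) rather than (4) is the right reading of how the corollary is obtained.
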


\begin{proof}
Each locally von Neumann algebra (locally $W^{\ast }$-algebra) has an
Arens-Michael decomposition consisting of all von Neumann algebras (all $%
W^{\ast }$-algebras) (see \cite{Frago86} and \cite{Joita99} for details),
and, because each von Neumann algebra ($W^{\ast }$-algebra) is an $AW^{\ast
} $-algebra (see for example \cite{Berberian72}), the statement of the
Propositions immediately follows.
\end{proof}

\begin{proposition}
In a locally AW*-algebra, the right annihilator of a right ideal is a
principal two-sided ideal generated by a central projection.
\end{proposition}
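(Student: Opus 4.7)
The plan is to reduce the claim to a purely algebraic computation, because by Theorem 1 the locally $AW^{\ast}$-algebra $A$ is a Baer $^{\ast}$-algebra, so the right annihilator of \emph{any} nonempty subset is already known to be of the form $eA$ for some projection $e\in A$. What must be added, when the subset happens to be a right ideal, is that this annihilating ideal is in fact two-sided and that the generating projection $e$ can be chosen to be central.

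First I would verify that for any right ideal $I$ of $A$, the right annihilator $R(I)$ is automatically a two-sided ideal. Stability under right multiplication is immediate from associativity: if $x\in R(I)$ and $a\in A$, then $s(xa)=(sx)a=\mathbf{0}$ for every $s\in I$. For stability under left multiplication, take $x\in R(I)$ and $a\in A$; for every $s\in I$, the element $sa$ again lies in $I$ (since $I$ is a right ideal), so $s(ax)=(sa)x=\mathbf{0}$, and hence $ax\in R(I)$.

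Next, invoking Theorem 1, I would write
\begin{equation*}
R(I)=eA
\end{equation*}
for the (unique) projection $e\in A$ furnished by the Baer property. To show that $e$ is central, note that $e=e\cdot \mathbf{1}\in eA=R(I)$ and that $R(I)$ is also a left ideal, so $ae\in eA$ for every $a\in A$; writing $ae=ey$ and multiplying by $e$ on the left gives $eae=e^{2}y=ey=ae$, whence $ae=eae$ for all $a\in A$. Substituting $a^{\ast}$ for $a$ and taking adjoints, and using $e^{\ast}=e$, then yields $ea=eae$ as well. Comparing the two identities gives $ae=ea$ for every $a\in A$, so $e$ is a central projection, and
\begin{equation*}
R(I)=eA=Ae
\end{equation*}
is the principal two-sided ideal it generates, as claimed.

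No substantial obstacle is expected, since the topological and projective-limit content of the problem has already been absorbed into Theorem 1, and what remains is a standard Baer $^{\ast}$-algebra argument of Kaplansky type. The only step that genuinely uses the involutive structure is the passage from $ae=eae$ to $ea=eae$ via the self-adjointness of $e$; without it, centrality of the generating projection would fail in generic Baer rings, so this is the delicate (though short) point of the proof.
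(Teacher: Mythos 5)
Your proof is correct and follows the same route as the paper's (very terse) proof: show that the right annihilator of a right ideal is a two-sided ideal, write it as $eA$ via the Baer property established in Theorem 1, and deduce that $e$ is central. You simply supply the details --- the associativity computation for two-sidedness and the passage $ae=eae=ea$ using $e^{\ast }=e$ --- that the paper's one-line proof leaves to the reader.
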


\begin{proof}
One can easily check that annihilator in question is a two-sided ideal and
is of the form $eA$, thus $e$ commutes with all elements of $A$, thus,
belongs to its center.
\end{proof}

\begin{proposition}
Each locally $AW^{\ast }$-algebra $A$ is unital.
\end{proposition}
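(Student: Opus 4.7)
The plan is to extract the unit directly from the Baer $^{\ast }$-algebra structure granted by condition (1) of Theorem 1, rather than from a Hilbert-space representation. The key observation is that the right annihilator $R(\{\mathbf{0}\})$ of the singleton $\{\mathbf{0}\}$ in $A$ is, trivially, all of $A$, since $\mathbf{0}\cdot x=\mathbf{0}$ for every $x\in A$.

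Because $A$ is a locally $AW^{\ast }$-algebra, Theorem 1 tells us $A$ is a Baer $^{\ast }$-algebra, so there is a projection $g\in A$ with
\begin{equation*}
A=R(\{\mathbf{0}\})=gA.
\end{equation*}
I would then argue that $g$ is a left identity: every $a\in A$ can be written as $a=gb$ for some $b\in A$, whence
\begin{equation*}
ga=g(gb)=g^{2}b=gb=a.
\end{equation*}
By the symmetric relation $L(S)=(R(S^{\ast }))^{\ast }$ recorded in the Preliminaries, $L(\{\mathbf{0}\})=A$ is of the form $Ae$ for a projection $e\in A$, and the mirrored computation shows $e$ is a right identity for $A$. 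A one-line check ($g=ge=e$) then forces $g=e$ to be a two-sided identity for $A$.

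The steps are all elementary once Theorem 1 is invoked, so there is no serious obstacle; the only point worth a brief verification is that the Baer condition applies to $S=\{\mathbf{0}\}$ (nonempty, as required by the definition) and produces a projection whose principal right ideal exhausts $A$. Alternatively, one could build the identity componentwise from the Arens--Michael decomposition $A\cong \underleftarrow{\lim }A_{\alpha }$ of Theorem 1, using that every $AW^{\ast }$-algebra $A_{\alpha }$ is unital and that the connecting $^{\ast }$-homomorphisms $g_{\alpha }^{\beta }$ are surjective and hence send $\mathbf{1}_{\beta }$ to $\mathbf{1}_{\alpha }$, so that $(\mathbf{1}_{\alpha })_{\alpha \in \Lambda }$ is a coherent family defining $\mathbf{1}\in A$. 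I prefer the first route because it avoids re-deriving unitality of $AW^{\ast }$-algebras and stays entirely within the annihilator formalism already set up.
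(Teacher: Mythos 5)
Your proposal is correct and takes essentially the same approach as the paper, which in fact gives both of your arguments: its second proof extracts the identity as the annihilating projection of the zero element (citing the preceding proposition on central projections where you instead verify two-sidedness directly via $L(\{\mathbf{0}\})$), and its first proof is exactly your alternative componentwise construction of $\mathbf{1}$ from the Arens--Michael decomposition into unital $AW^{\ast }$-algebras.
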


\begin{proof}
1st Proof.

Let 
\begin{equation*}
A\cong \underleftarrow{\lim }A_{\alpha },
\end{equation*}%
$\alpha \in \Lambda ,$ be the Arens-Michael decomposition of $A$, where each
algebra $A_{\alpha }$ is an $AW^{\ast }$-algebra. But each $AW^{\ast }$%
-algebra is unital (see for example \cite{Kaplansky51}), and let its unit be
denoted by $\mathbf{1}_{\alpha }.$ Let us now consider a unique element $%
\mathbf{1}$ in A such that 
\begin{equation*}
\pi _{\alpha }(\mathbf{1})=\mathbf{1}_{\alpha },
\end{equation*}%
for all $\alpha \in \Lambda .$ One can easily check that the element $%
\mathbf{1}$ is the unit of $A$.

2nd Proof.

From Proposition XX it follows that the annihilator of zero element of $A$
is generated by a central projection, and this projection plays a role of an
identity in $A$.
\end{proof}

For what follows we introduce the following Definition:

\begin{definition}
A locally $C^{\ast }$-subalgebra of a locally $AW^{\ast }$-algebra which is
at the same time a Baer $^{\ast }$-subalgebra is called a locally $AW^{\ast
} $\textbf{-subalgebra}.
\end{definition}

Now we can formulate one more Corollary from Theorem 1.

\begin{proposition}
A locally $AW^{\ast }$-subalgebra $B$ of a locally $AW^{\ast }$-algebra $A$
is itself an $AW^{\ast }$-algebra.
\end{proposition}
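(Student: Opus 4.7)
The plan is to reduce the statement to Theorem 1: since $B$ is by assumption a locally $C^{\ast}$-subalgebra of $A$, it is already a locally $C^{\ast}$-algebra (closed $^{\ast}$-subalgebras of locally $C^{\ast}$-algebras are locally $C^{\ast}$-algebras, as noted in the preliminaries). By the equivalence $1 \Longleftrightarrow$ (locally $AW^{\ast}$) in Theorem 1, it therefore suffices to show that $B$ is a Baer $^{\ast}$-algebra, i.e.\ that for every nonempty $S \subseteq B$ there is a projection in $B$ whose complement generates $R_B(S)$.

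First I would fix an arbitrary nonempty $S \subseteq B$. Since $A$ is a locally $AW^{\ast}$-algebra, hence a Baer $^{\ast}$-algebra, there exists a projection $e \in A$ such that $R_A(S) = eA$. The hypothesis that $B$ is a Baer $^{\ast}$-subalgebra of $A$ gives precisely that this projection lies in $B$, so $e \in \mathcal{P}(B)$.

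The key verification is that $R_B(S)$, the right annihilator computed inside $B$, coincides with $eB$. One inclusion is immediate: $eB \subseteq B \cap eA = B \cap R_A(S) \subseteq R_B(S)$, using that every element of $eA$ kills $S$. For the converse, take $x \in R_B(S)$. Then $x \in B \cap R_A(S) = B \cap eA$, so $x = ey$ for some $y \in A$. Multiplying on the left by $e$ and using $e^2 = e$ gives $ex = ey = x$; hence $x = ex$ with $x \in B$, so $x \in eB$. Thus $R_B(S) = eB$ with $e \in \mathcal{P}(B)$, showing that $B$ is a Baer $^{\ast}$-algebra.

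Combining this with the fact that $B$ is a locally $C^{\ast}$-algebra, Theorem 1 applied to $B$ yields that $B$ is a locally $AW^{\ast}$-algebra. I do not expect a serious obstacle here; the only subtlety is keeping track of whether the annihilating projection supplied by the ambient Baer structure actually lies in $B$, which is exactly what the definition of Baer $^{\ast}$-subalgebra guarantees, and that the same projection continues to generate the annihilator relative to $B$, which is a one-line computation using idempotence of $e$.
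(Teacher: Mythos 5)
Your proof is correct, and it takes a genuinely more direct route than the paper's. You verify condition (1) of Theorem 1 for $B$ head-on: the definition of a Baer $^{\ast}$-subalgebra hands you the annihilating projection $e$ of $S$ (computed in $A$) as an element of $B$, and your one-line computation $x=ey\Rightarrow ex=e^{2}y=x$ shows $R_{B}(S)=R_{A}(S)\cap B=eA\cap B=eB$; this is essentially the remark already made in the preliminaries that a Baer $^{\ast}$-subalgebra is itself a Baer $^{\ast}$-algebra, combined with the fact that $B$ is a locally $C^{\ast}$-algebra. The paper instead verifies condition (3) of Theorem 1: it passes to an Arens--Michael decomposition $A\cong\underleftarrow{\lim}A_{\alpha}$, argues that each $B_{\alpha}=\pi_{\alpha}(B)$ is a Baer $^{\ast}$-subalgebra (hence an $AW^{\ast}$-subalgebra) of $A_{\alpha}$ by pushing annihilating projections through $\pi_{\alpha}$, and then reassembles $B\cong\underleftarrow{\lim}B_{\alpha}$. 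Note that the paper's argument itself invokes ``due to the fact that $B$ is Baer,'' so it already presupposes the very point you prove; your route skips the componentwise bookkeeping (closedness of $\pi_{\alpha}(B)$, compatibility of annihilators with $\pi_{\alpha}$) that the paper only sketches. What the paper's longer route buys is an explicit Arens--Michael decomposition of $B$ into $AW^{\ast}$-algebras, which is a concrete witness for condition (3) and is reused in the later structural theorems; your argument establishes the proposition itself more economically.
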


\begin{proof}
Immediately follows from Theorem 1 and the analogous result about $AW^{\ast
} $-subalgebras of an $AW^{\ast }$-algebra if one considers the
Arens-Michael decomposition 
\begin{equation*}
A\cong \underleftarrow{\lim }A_{\alpha },
\end{equation*}%
$\alpha \in \Lambda ,$ of $A$ into a projective limit of a projective family
of $AW^{\ast }$-algebras $A_{\alpha }.$ But if $\pi _{\alpha }$ is the
natural projection, and 
\begin{equation*}
A_{\alpha }=\pi _{\alpha }(A),
\end{equation*}%
for $\alpha \in \Lambda ,$ then 
\begin{equation*}
\pi _{\alpha }(B)=B_{\alpha },
\end{equation*}%
$\forall \alpha \in \Lambda .$

Let us show now that $B_{\alpha }$ is an $AW^{\ast }$-subalgebra of the $%
AW^{\ast }$-algebra $A_{\alpha }$ for each $\alpha \in \Lambda .$ In is
enough to show that $B_{\alpha }$ is a Baer $^{\ast }$-algebras. In fact,
let $S_{\alpha }$ be an arbitrary subset of $B_{\alpha },$ and let 
\begin{equation*}
S=\{x\in B:\pi _{\alpha }(x)\in B_{\alpha }\},
\end{equation*}%
and thus 
\begin{equation*}
S\subset B.
\end{equation*}%
But due to the fact that $B$ is Baer, the right annihilator of $S$ in $B$ 
\begin{equation*}
R_{B}(S)=eB,
\end{equation*}%
for some projection $e$ in $B$. But then one can see that 
\begin{equation*}
e_{\alpha }=\pi _{\alpha }(e),
\end{equation*}
is a projection in $B_{\alpha },$ and one can check that the right
annihilator of $S_{\alpha }$ in $B_{\alpha }$ 
\begin{equation*}
R_{B_{\alpha }}(S_{\alpha })=e_{\alpha }B_{\alpha }.
\end{equation*}

Now, one can notice that from the fact that 
\begin{equation*}
A\cong \underleftarrow{\lim }A_{\alpha },
\end{equation*}%
$\alpha \in \Lambda ,$ and the family of algebras $A_{\alpha }$ is a
projective family with the maps $g_{\alpha }^{\beta },$ it follows that the
family $B_{\alpha }$ is as well a projective family of algebras with the
maps 
\begin{equation*}
\widehat{g}_{\alpha }^{\beta }=g_{\alpha }^{\beta }|B_{\beta },
\end{equation*}%
for each $\alpha ,\beta \in \Lambda ,$

and 
\begin{equation*}
B\cong \underleftarrow{\lim }B_{\alpha },
\end{equation*}%
$\alpha \in \Lambda .$
\end{proof}

\subsection{The center, maximal commutative $^{\ast }$-subalgebra and the
corners of a locally $AW^{\ast }$-algebra}

It was shown by Kaplansky that in an $AW^{\ast }$-algebra, its center, each
its maximal commutative $^{\ast }$-subalgebra, and each its corner
subalgebra, i.e. subalgebra of the form $eAe$, where $e$ is a projection in $%
A$, is an $AW^{\ast }$-subalgebra (see \cite{Kaplansky51}). The following
Theorem is a generalization of this result to the case of locally $AW^{\ast
} $-algebras.

\begin{theorem}
In a locally $AW^{\ast }$-algebra $A,$

(a). its center $Z$;

(b). each its maximal commutative $^{\ast }$-subalgebra $B;$

(c). each its corner subalgebra, i.e. subalgebra of the form $eAe$, where $e$
is a projection in $A$;

is a locally $AW^{\ast }$-subalgebra.
\end{theorem}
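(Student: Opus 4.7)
The strategy is to reduce each of the three assertions to Kaplansky's corresponding classical result for $AW^{\ast}$-algebras via Theorem 1 and the Arens-Michael decomposition. Fix a decomposition $A \cong \underleftarrow{\lim} A_\alpha$ with each $A_\alpha$ an $AW^{\ast}$-algebra (provided by Theorem 1, part 4), with canonical projections $\pi_\alpha : A \to A_\alpha$ and transition maps $g_\alpha^\beta$. The unifying plan is, in each case, to (i) verify that the candidate subalgebra is closed in $A$, hence a locally $C^{\ast}$-subalgebra; (ii) identify it as the projective limit of the corresponding substructures in the components $A_\alpha$; (iii) invoke the classical Kaplansky result so that those components are $AW^{\ast}$-subalgebras of $A_\alpha$; and (iv) conclude local $AW^{\ast}$-ness via Theorem 1 while checking the Baer $^{\ast}$-subalgebra condition componentwise.

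For case (a), $Z$ is closed as an intersection of kernels of the continuous commutator maps $a \mapsto ax - xa$. Setting $Z_\alpha := \text{center}(A_\alpha)$, one verifies $\pi_\alpha(Z) = Z_\alpha$: the inclusion $\subseteq$ is immediate from $\pi_\alpha$ being a surjective $^{\ast}$-homomorphism, and the reverse follows by lifting a central element coherently through the projective system. Thus $Z \cong \underleftarrow{\lim} Z_\alpha$, each $Z_\alpha$ is an $AW^{\ast}$-subalgebra of $A_\alpha$ by Kaplansky, and Theorem 1 gives $Z$ as a locally $AW^{\ast}$-algebra. The Baer subalgebra property is verified componentwise: for $S \subseteq Z$, the projection $e \in A$ generating $R_A(S)$ satisfies $\pi_\alpha(e) \in Z_\alpha$ by the classical $AW^{\ast}$-result in each $A_\alpha$, forcing $e \in Z$.

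Case (c) follows the same template. Writing $e = (e_\alpha)$ with $e_\alpha := \pi_\alpha(e)$ a projection in $A_\alpha$, one has $eAe \cong \underleftarrow{\lim} e_\alpha A_\alpha e_\alpha$; each corner $e_\alpha A_\alpha e_\alpha$ is an $AW^{\ast}$-subalgebra of $A_\alpha$ by Kaplansky, and Theorem 1 delivers local $AW^{\ast}$-ness of $eAe$. The Baer condition needs a mild adjustment, since for $S \subseteq eAe$ the annihilator projection $f \in A$ need not lie in $eAe$; but $\mathbf{1} - e$ already annihilates $S$ on the right, so $\mathbf{1} - e \leq f$, whence $f$ and $e$ commute and the relevant generator inside $eAe$ is the meet $fe$, which does belong to $eAe$ and componentwise agrees with $f_\alpha e_\alpha$, where $f_\alpha A_\alpha = R_{A_\alpha}(\pi_\alpha(S))$.

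The main obstacle is case (b): verifying that $B_\alpha := \pi_\alpha(B)$ is genuinely \emph{maximal} commutative in $A_\alpha$, not merely commutative, so that Kaplansky's classical result applies. This requires a lifting argument: given $y_\alpha \in A_\alpha$ commuting with $B_\alpha$, one must produce $y \in A$ lifting $y_\alpha$ and commuting with all of $B$; maximality of $B$ in $A$ then forces $y \in B$, whence $y_\alpha \in B_\alpha$. The lift is constructed coherently along the projective system using surjectivity of the transition maps $g_\alpha^\beta$ together with closedness of $B$. Once maximality is secured, the rest of case (b) follows the template of case (a): Kaplansky identifies each $B_\alpha$ as an $AW^{\ast}$-subalgebra of $A_\alpha$, Theorem 1 upgrades $B$ to a locally $AW^{\ast}$-algebra, and the Baer subalgebra condition is verified componentwise. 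This maximality-preservation step is the only part of the proof that goes genuinely beyond a transparent transfer of Kaplansky's classical theorem through the Arens-Michael decomposition.
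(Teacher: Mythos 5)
Your proposal follows essentially the same route as the paper's proof: push each of $Z$, $B$, and $eAe$ through the Arens--Michael decomposition, identify its image under $\pi _{\alpha }$ with the corresponding Kaplansky subobject (center, maximal commutative $^{\ast }$-subalgebra, corner) of the $AW^{\ast }$-algebra $A_{\alpha }$, exhibit the subalgebra as the projective limit of these, and conclude via Theorem 1. In fact you are more explicit than the paper on precisely the points it dispatches with ``one can easily see'' --- namely that $\pi _{\alpha }$ maps the center \emph{onto} the center and a maximal commutative $^{\ast }$-subalgebra onto a \emph{maximal} one, and that in case (c) the annihilating projection computed in $A$ must be cut down by $e$ to land in the corner --- so the proposal matches and slightly sharpens the paper's argument.
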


\begin{proof}
(a). Let 
\begin{equation*}
A\cong \underleftarrow{\lim }A_{\alpha },
\end{equation*}%
$\alpha \in \Lambda ,$ be the Arens-Michael decomposition of $A$ into a
projective limit of the projective family of $AW^{\ast }$-algebras $%
A_{\alpha }$ with the maps $g_{\alpha }^{\beta },$ and $Z$ be the center of $%
A.$ But, one can easily see that for each $\alpha \in \Lambda ,$ 
\begin{equation*}
Z_{\alpha }=\pi _{\alpha }(Z),
\end{equation*}%
is a center of $A_{\alpha },$ and the family of algebras $Z_{\alpha },\alpha
\in \Lambda ,$ is a projective family of commutative $AW^{\ast }$-algebras
with the maps 
\begin{equation*}
\widehat{g}_{\alpha }^{\beta }=g_{\alpha }^{\beta }|Z_{\beta },
\end{equation*}%
for each $\alpha ,\beta \in \Lambda ,$

and 
\begin{equation*}
Z\cong \underleftarrow{\lim }Z_{\alpha },
\end{equation*}%
$\alpha \in \Lambda .$

(b). Let 
\begin{equation*}
A\cong \underleftarrow{\lim }A_{\alpha },
\end{equation*}%
$\alpha \in \Lambda ,$ be the Arens-Michael decomposition of $A$ into a
projective limit of the projective family of $AW^{\ast }$-algebras $%
A_{\alpha }$ with the maps $g_{\alpha }^{\beta },$ and $B$ be the maximal
commutative *-subalgebra of $A$. But, one can easily see that for each $%
\alpha \in \Lambda ,$ 
\begin{equation*}
B_{\alpha }=\pi _{\alpha }(B),
\end{equation*}%
is a maximal commutative *-subalgebra of $A_{\alpha },$ and the family of
algebras $B_{\alpha },\alpha \in \Lambda ,$ is a projective family of
commutative $AW^{\ast }$-algebras with the maps 
\begin{equation*}
\widehat{g}_{\alpha }^{\beta }=g_{\alpha }^{\beta }|B_{\beta },
\end{equation*}%
for each $\alpha ,\beta \in \Lambda ,$

and 
\begin{equation*}
B\cong \underleftarrow{\lim }B_{\alpha },
\end{equation*}%
$\alpha \in \Lambda .$

(c). Let 
\begin{equation*}
A\cong \underleftarrow{\lim }A_{\alpha },
\end{equation*}%
$\alpha \in \Lambda ,$ be the Arens-Michael decomposition of $A$ into a
projective limit of the projective family of $AW^{\ast }$-algebras $%
A_{\alpha }$ with the maps $g_{\alpha }^{\beta },$ and $eAe$ be its corner,
with $e$ be a projection from $A$. But, one can easily see that for each $%
\alpha \in \Lambda ,$ 
\begin{equation*}
B_{\alpha }=\pi _{\alpha }(eAe)=\pi _{\alpha }(e)A_{\alpha }\pi _{\alpha
}(e)=e_{\alpha }A_{\alpha }e_{\alpha },
\end{equation*}%
where a projection $e_{\alpha }$ in A$_{\alpha }$ is such that $e_{\alpha
}=\pi _{\alpha }(e)$ , is a corner of $A_{\alpha },$ and the family of
algebras $B_{\alpha },\alpha \in \Lambda ,$ is a projective family of $%
AW^{\ast }$-algebras with the maps 
\begin{equation*}
\widehat{g}_{\alpha }^{\beta }=g_{\alpha }^{\beta }|B_{\beta },
\end{equation*}%
for each $\alpha ,\beta \in \Lambda ,$

and 
\begin{equation*}
eAe\cong \underleftarrow{\lim }e_{\alpha }A_{\alpha }e_{\alpha },
\end{equation*}%
$\alpha \in \Lambda .$
\end{proof}

\subsection{The $^{\ast }$-subalgebra of bounded elements of a locally $%
AW^{\ast }$-algebra}

As it was mentioned above, the bounded part $b(A)$ of a locally $C^{\ast }$%
-algebra $A$ is a $C^{\ast }$-subalgebra of $A$, which is dense in $A$ in
its projective topology (see for example \cite{Frago05}). When $A$ is a
locally $AW^{\ast }$-algebra, we can say a little more.

Let us recall that a famous result of Gelfand states that for any unital
Banach algebra there exists an equivalent norm, in which the norm of the
identity element is equal to $1$. Analogous result of Brooks states that for
any unital multiplicatively-covex algebra there exists a family of seminorms
that defines the same topology and such that each seminorm in that family is
equal to $1$ on the identity element (see for example \cite{Frago05} for
details). Thus, without a loss of generality in what follows we assume that
each seminorm from the separating saturated family that defines a topology
on a locally $AW^{\ast }$-algebra takes a value $1$ on the identity element.

First, we need the following Lemma:

\begin{lemma}
Any projection in a unital locally $C^{\ast }$-algebra $A$ belongs to its $%
C^{\ast }$-subalgebra $b(A)$ of bounded elements.
\end{lemma}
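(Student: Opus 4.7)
My plan is to reduce the statement to a direct computation using the $C^{\ast}$-condition on each seminorm. Let $p\in A$ be a projection, so $p=p^{\ast}=p^{2}$, and fix an arbitrary continuous $C^{\ast}$-seminorm $\|\cdot\|_{\alpha}\in N(A)$ from the separating saturated family defining the topology of $A$.

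First, I would apply the $C^{\ast}$-condition to $p$: because $p^{\ast}p=p\cdot p=p^{2}=p$, one obtains
\begin{equation*}
\|p\|_{\alpha}^{2}=\|p^{\ast}p\|_{\alpha}=\|p\|_{\alpha}.
\end{equation*}
Hence $\|p\|_{\alpha}\bigl(\|p\|_{\alpha}-1\bigr)=0$, which forces $\|p\|_{\alpha}\in\{0,1\}$ for every $\alpha\in\Lambda$. Consequently
\begin{equation*}
\|p\|_{\infty}=\sup\{\|p\|_{\alpha}:\alpha\in\Lambda\}\leq 1<\infty,
\end{equation*}
so $p$ lies in $b(A)$ by definition. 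The normalization convention recalled just before the lemma (each seminorm from the defining family equals $1$ on the identity) is compatible with this bound and, in particular, shows that the unit $\mathbf{1}$ itself satisfies $\|\mathbf{1}\|_{\infty}=1$, which is the natural base case.

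The argument is essentially a one-line computation, so there is no real obstacle; the only thing worth being careful about is that the supremum defining $\|\cdot\|_{\infty}$ is taken over the entire saturated family $N(A)$ (not merely over a cofinal subfamily), and the $C^{\ast}$-identity must hold on every one of those seminorms. Both points are guaranteed by the definition of a locally $C^{\ast}$-algebra recalled in the preliminaries, so the conclusion $p\in b(A)$ follows with $\|p\|_{\infty}\leq 1$.
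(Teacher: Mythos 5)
Your proof is correct and follows essentially the same route as the paper: both bound each seminorm of the projection by $1$ and conclude $\left\Vert p\right\Vert _{\infty }\leq 1<\infty $. The only difference is cosmetic --- the paper passes to the Arens--Michael factors $A_{\alpha }$ and quotes the standard fact that a projection in a $C^{\ast }$-algebra has norm at most $1$, whereas you carry out that one-line $C^{\ast }$-identity computation $\left\Vert p\right\Vert _{\alpha }^{2}=\left\Vert p^{\ast }p\right\Vert _{\alpha }=\left\Vert p\right\Vert _{\alpha }$ directly on the seminorms, which the paper leaves implicit.
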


\begin{proof}
Let $e$ be a projection in a locally $C^{\ast }$-algebra $A$, and let 
\begin{equation*}
A\cong \underleftarrow{\lim }A_{\alpha },
\end{equation*}%
$\alpha \in \Lambda ,$ be the Arens-Michael decomposition of $A$ into a
projective limit of the projective family of $C^{\ast }$-algebras $A_{\alpha
}.$ But for each $\alpha \in \Lambda ,$ 
\begin{equation*}
\left\Vert e\right\Vert _{\alpha }=\left\Vert \pi _{\alpha }(e)\right\Vert
_{\alpha }\leq 1,
\end{equation*}%
and thus 
\begin{equation*}
\left\Vert e\right\Vert _{\infty }\leq 1<\infty .
\end{equation*}
\end{proof}

Now we are ready for our next Theorem:

\begin{theorem}
The bounded part $b(A)$\ of a locally $AW^{\ast }$-algebra is an $AW^{\ast }$%
-algebra, which is dense in $A$ in its projective topology.
\end{theorem}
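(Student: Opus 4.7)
The plan is to split the statement into the two claims: first, that $b(A)$ is an $AW^{\ast}$-algebra; second, that it is dense in $A$ in the projective topology. The density assertion is not new: it holds for the bounded part of any locally $C^{\ast}$-algebra by the Fragoulopoulou reference already cited in the paper, so only the first claim requires argument. Since $b(A)$ is already known to be a $C^{\ast}$-algebra, proving that it is an $AW^{\ast}$-algebra reduces to verifying the Baer $^{\ast}$-algebra condition: for every $S\subseteq b(A)$, the right annihilator $R_{b(A)}(S)$ is generated by a projection of $b(A)$.

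The key preparatory observation is Lemma 3 just established: every projection of $A$ lies in $b(A)$. Since conversely every projection of $b(A)$ is obviously a projection of $A$, the two projection lattices coincide, $\mathcal{P}(b(A))=\mathcal{P}(A)$. So given any $S\subseteq b(A)$, viewed also as a subset of $A$, I would invoke the Baer property of $A$ (part (1) of Theorem 1) to produce a projection $e\in A$ with $R_A(S)=eA$, and then note $e\in b(A)$ automatically by Lemma 3.

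It remains to show that this same $e$ generates $R_{b(A)}(S)$ inside $b(A)$, i.e.\
\begin{equation*}
R_{b(A)}(S)=e\cdot b(A).
\end{equation*}
The inclusion $e\cdot b(A)\subseteq R_{b(A)}(S)$ is immediate from $eA\subseteq R_A(S)$ together with the fact that $b(A)$ is a $^{\ast}$-subalgebra of $A$. For the reverse inclusion, take $x\in R_{b(A)}(S)$. Then $x\in R_A(S)=eA$, so $x=ey$ for some $y\in A$; multiplying by $e$ gives $ex=e^{2}y=ey=x$, whence $x=ex\in e\cdot b(A)$ because $x$ itself is in $b(A)$. Thus $b(A)$ is a Baer $^{\ast}$-algebra and hence an $AW^{\ast}$-algebra.

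I do not expect a real obstacle here; the content of the argument has already been placed in Lemma 3, which reduces the matter to a short annihilator manipulation. The only point requiring even momentary care is the identity $R_{b(A)}(S)=R_A(S)\cap b(A)$, which is tautological from the definitions, and the subsequent check that intersecting $eA$ with $b(A)$ returns exactly $e\cdot b(A)$ — and this in turn is forced by the idempotence of $e$ together with the stability of $b(A)$ under left multiplication by its bounded element $e$.
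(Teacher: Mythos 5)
Your proposal is correct and follows essentially the same route as the paper: reduce to the Baer condition for $b(A)$, obtain the annihilating projection $e$ from the Baer property of $A$, and use Lemma 3 to place $e$ in $b(A)$. You actually supply the one detail the paper glosses over, namely the verification that $R_{b(A)}(S)=R_A(S)\cap b(A)=e\cdot b(A)$ via $x=ex$ for $x\in eA$.
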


\begin{proof}
Let $A$ be a locally $AW^{\ast }$-algebra, and let 
\begin{equation*}
A\cong \underleftarrow{\lim }A_{\alpha },
\end{equation*}%
$\alpha \in \Lambda ,$ be the Arens-Michael decomposition of $A$ into a
projective limit of the projective family of $AW^{\ast }$-algebras $%
A_{\alpha }.$As we already mentioned above the subalgebra $b(A)$ is a (dense
in its projective topology) $C^{\ast }$-subalgebra of $A$. Thus, it remains
for show that $b(A)$ is a Baer subalgebra of $A$. So, let a subset $S$
belongs to $b(A).$ As $S$ as well belongs to $A$, its right annihilator in $%
A $ 
\begin{equation*}
R_{A}(S)=eA,
\end{equation*}%
for some projection $e$ from $A$. From the Lemma XX above it follows that $e$
belongs to $b(A)$. Now, one can easily see that the right annihilator $%
R_{b(A)}(S)$\ of $S$ in $b(A)$ is such that%
\begin{equation*}
R_{b(A)}(S)=b(R_{A}(S))=b(eA)=e\cdot b(A).
\end{equation*}
\end{proof}

\subsection{The Spectral Theorem for locally $AW^{\ast }$-algebras}

In the present subsection we formulate and prove a version of The Spectral
Theorem for locally AW*-algebras. To begin with, let us define a notion of a
spectral family of projections.

\begin{definition}
A family of projections $e_{\lambda },$ $\lambda ,\mu \in 
\mathbb{R}
,$ from an $AW^{\ast }$-algebra $A$ is called \textbf{spectral}, if:

(i). 
\begin{equation*}
e_{\lambda }\leq e_{\mu },
\end{equation*}%
for 
\begin{equation*}
\lambda \leq \mu ;
\end{equation*}

(ii).%
\begin{equation*}
\underset{\lambda }{\sup }e_{\lambda }=\mathbf{1}\text{ and }\underset{%
\lambda }{\inf }e_{\lambda }=\mathbf{0};
\end{equation*}

(iii). 
\begin{equation*}
e_{\lambda }=\underset{\mu <\lambda }{\sup }e_{\mu }.
\end{equation*}
\end{definition}

\begin{theorem}[The Spectral Theorem for locally $AW^{\ast }$-algebras]
For any self-adjoint element $x$ in a locally $AW^{\ast }$-algebra $A$,
there exists a unique spectral family of projections $e_{\lambda },$ $%
\lambda \in \overline{%
\mathbb{R}
},$ such that $e_{\lambda }$ belongs to the maximal commutative $^{\ast }$%
-subalgebra $B$ of $A$, generated by $x$, and 
\begin{equation*}
x=\dint\limits_{-\sup_{\alpha }\left\Vert x\right\Vert _{\alpha
}}^{\sup_{\alpha }\left\Vert x\right\Vert _{\alpha }+\epsilon }\lambda
de_{\lambda },
\end{equation*}%
where $\epsilon >0,$ and by the integral we understand a limit of certain
integral sums in the projective topology of $A$.
\end{theorem}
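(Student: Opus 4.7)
The plan is to reduce everything to the classical Spectral Theorem for $AW^{\ast}$-algebras via the Arens-Michael decomposition, which by Theorem 1 consists of $AW^{\ast}$-algebras. Fix an arbitrary Arens-Michael decomposition $A\cong \underleftarrow{\lim} A_{\alpha}$ with connecting $^{\ast}$-homomorphisms $g_{\alpha}^{\beta}\colon A_{\beta}\to A_{\alpha}$ for $\alpha\preceq\beta$. For the given self-adjoint $x\in A$, each $x_{\alpha}:=\pi_{\alpha}(x)$ is self-adjoint in the $AW^{\ast}$-algebra $A_{\alpha}$, so Kaplansky's classical Spectral Theorem (cf.\ \cite{Kaplansky51}) supplies a \emph{unique} spectral family $\{e_{\lambda}^{\alpha}\}_{\lambda\in\overline{\mathbb{R}}}$ of projections in the maximal commutative $^{\ast}$-subalgebra $B_{\alpha}=\pi_{\alpha}(B)$ of $A_{\alpha}$ generated by $x_{\alpha}$, with $x_{\alpha}=\int \lambda\,de_{\lambda}^{\alpha}$ convergent in $\|\cdot\|_{\alpha}$.

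The first key step is the compatibility of these spectral families with the connecting maps. Since $g_{\alpha}^{\beta}$ is a continuous $^{\ast}$-homomorphism of $AW^{\ast}$-algebras sending $x_{\beta}$ to $x_{\alpha}$, the family $\{g_{\alpha}^{\beta}(e_{\lambda}^{\beta})\}$ is a spectral family in $A_{\alpha}$ for $x_{\alpha}$ (monotonicity, suprema, and left-continuity are preserved because $g_{\alpha}^{\beta}$ is a continuous $^{\ast}$-homomorphism that respects the lattice of projections and carries orthogonal suprema to orthogonal suprema in its image). By uniqueness of the spectral family for $x_{\alpha}$ we conclude $g_{\alpha}^{\beta}(e_{\lambda}^{\beta})=e_{\lambda}^{\alpha}$ for every $\lambda$ and every $\alpha\preceq\beta$. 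Hence, for each $\lambda$, the thread $(e_{\lambda}^{\alpha})_{\alpha\in\Lambda}$ is coherent and by the universal property of the projective limit there is a unique $e_{\lambda}\in A$ with $\pi_{\alpha}(e_{\lambda})=e_{\lambda}^{\alpha}$ for all $\alpha$. Since each $\pi_{\alpha}$ is a $^{\ast}$-homomorphism and $e_{\lambda}^{\alpha}$ is a projection (resp.\ lies in $B_{\alpha}$), the element $e_{\lambda}$ is a projection in $B$ (using the description $B\cong\underleftarrow{\lim} B_{\alpha}$ established in Theorem 2(b)). The defining properties (i)--(iii) of a spectral family lift from each $A_{\alpha}$ to $A$ in the same way: monotonicity and the normalization $\sup_{\lambda}e_{\lambda}=\mathbf{1}$, $\inf_{\lambda}e_{\lambda}=\mathbf{0}$, and left-continuity $e_{\lambda}=\sup_{\mu<\lambda}e_{\mu}$ can be verified coordinatewise using the proof of Theorem 1, which shows that suprema of pairwise orthogonal projections in $A$ are computed levelwise.

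Next I would establish the integral formula. For each $\alpha\in\Lambda$ and $\epsilon>0$ the classical theorem gives
\begin{equation*}
x_{\alpha}=\int_{-\|x_{\alpha}\|_{\alpha}}^{\|x_{\alpha}\|_{\alpha}+\epsilon}\lambda\,de_{\lambda}^{\alpha},
\end{equation*}
as a $\|\cdot\|_{\alpha}$-limit of Riemann--Stieltjes sums $S_{\alpha}(P)=\sum_{k}\lambda_{k}(e_{\lambda_{k}}^{\alpha}-e_{\lambda_{k-1}}^{\alpha})$ over partitions $P$ of $[-\sup_{\beta}\|x\|_{\beta},\sup_{\beta}\|x\|_{\beta}+\epsilon]$. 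Since $\|x_{\alpha}\|_{\alpha}\leq\sup_{\beta}\|x\|_{\beta}$ and the projections $e_{\lambda}$ project levelwise onto $e_{\lambda}^{\alpha}$, the corresponding Riemann--Stieltjes sum $S(P)=\sum_{k}\lambda_{k}(e_{\lambda_{k}}-e_{\lambda_{k-1}})$ formed in $A$ satisfies $\pi_{\alpha}(S(P))=S_{\alpha}(P)$, hence $\|x-S(P)\|_{\alpha}=\|x_{\alpha}-S_{\alpha}(P)\|_{\alpha}\to 0$ as the mesh of $P$ shrinks, uniformly in $\alpha$ not being required---only pointwise convergence in each $\alpha$---which is exactly convergence in the projective topology of $A$. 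This yields the desired integral representation.

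Finally, uniqueness of the family in $A$ follows from uniqueness at each level: if $\{f_{\lambda}\}$ is another spectral family in $B$ with the same integral representation, then $\{\pi_{\alpha}(f_{\lambda})\}$ is a spectral family in $B_{\alpha}$ with integral representing $x_{\alpha}$, so by Kaplansky's uniqueness $\pi_{\alpha}(f_{\lambda})=e_{\lambda}^{\alpha}$ for all $\alpha$, whence $f_{\lambda}=e_{\lambda}$ in $A$. The main obstacle in this plan is the verification that the connecting maps $g_{\alpha}^{\beta}$ actually preserve \emph{all} three defining properties of a spectral family (in particular the left-continuity $e_{\lambda}^{\beta}=\sup_{\mu<\lambda}e_{\mu}^{\beta}$, which requires that continuous $^{\ast}$-homomorphisms between $AW^{\ast}$-algebras preserve suprema of upward-directed families of projections); this is the non-routine ingredient that invokes the classical theory but must be stated carefully. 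The remainder is a coherent levelwise lifting argument of the type already used throughout Section~3.
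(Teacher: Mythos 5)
Your proposal reaches the same conclusion but by a genuinely different construction of the spectral family, and the difference matters. The paper does not lift the levelwise spectral families through the projective limit; it defines the family intrinsically in $A$ itself, setting $e_{\lambda }(x)=r((\lambda \mathbf{1}-x)_{+})$, where $r$ is the right support, which exists because $A$ is a Baer $^{\ast }$-algebra by Theorem 1. Your route instead hinges entirely on the compatibility $g_{\alpha }^{\beta }(e_{\lambda }^{\beta })=e_{\lambda }^{\alpha }$, which you deduce from the claim that a surjective $^{\ast }$-homomorphism of $AW^{\ast }$-algebras carries a spectral family to a spectral family. That claim requires the connecting maps to preserve right supports (equivalently, suprema of projections), which is not automatic for quotient maps of $AW^{\ast }$-algebras; you flag this as the non-routine ingredient but do not close it, and since your existence proof stands or falls with it, this is a genuine gap rather than a stylistic difference. (The paper still needs $\pi _{\alpha }(e_{\lambda }(x))$ to be the spectral projection of $\pi _{\alpha }(x)$ for its convergence estimate, so it does not wholly escape the issue, but its construction of $e_{\lambda }$ does not depend on it.)

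A second concrete point: the theorem is stated for $\lambda \in \overline{\mathbb{R}}$ with limits of integration $\pm \sup_{\alpha }\left\Vert x\right\Vert _{\alpha }$, which are infinite when $x\notin b(A)$. This is precisely why the paper's proof introduces the homeomorphisms $\varphi _{\alpha }^{\beta }$ from $\overline{\mathbb{R}}$ to $\mathbb{R}$ and the proportionally related partitions $\{_{\alpha }\lambda _{i}\}$ of the finite intervals $[-\left\Vert \pi _{\alpha }(x)\right\Vert _{\alpha },\left\Vert \pi _{\alpha }(x)\right\Vert _{\alpha }+\epsilon ]$: a single finite partition datum induces at every level $\alpha $ a finite partition whose mesh $\delta _{\alpha }$ controls $\left\Vert \pi _{\alpha }(x-\sigma )\right\Vert _{\alpha }$. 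Your Riemann--Stieltjes sums over partitions of $[-\sup_{\beta }\left\Vert x\right\Vert _{\beta },\sup_{\beta }\left\Vert x\right\Vert _{\beta }+\epsilon ]$ with shrinking mesh only make sense when this interval is finite, i.e.\ when $x$ is bounded; for unbounded $x$ the limiting procedure must be reorganized along the lines of the paper's construction. Your uniqueness argument, by contrast, is sound and is in fact more explicit than what the paper offers.
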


\begin{proof}
Let $A$ be a locally $AW^{\ast }$-algebra, and%
\begin{equation*}
A\cong \underleftarrow{\lim }A_{\alpha },
\end{equation*}%
$\alpha \in \Lambda ,$ be the Arens-Michael decomposition of $A$ into a
projective limit of the projective family of $AW^{\ast }$-algebras $%
A_{\alpha }$ with the maps $g_{\alpha }^{\beta }.$ Recall that if $a$ is a
self-adjoint element from a locally $C^{\ast }$-algebra, we will denote by $%
a_{+}$ (resp. $a_{-}$) the \textbf{positive} (resp. \textbf{negative}) part
of $a$, and the following conditions are valid (see \cite{Frago05}):%
\begin{equation*}
a=a_{+}-a_{-};
\end{equation*}%
\begin{equation*}
a_{+}a_{-}=\mathbf{0}=a_{-}a_{+};
\end{equation*}%
\begin{equation*}
\left\vert a\right\vert =a_{+}+a_{-}.
\end{equation*}

Let now $x$ be a self-adjoint element from $A$. Note, that the family of
projections from $A$ defined as 
\begin{equation*}
e_{\lambda }(x)=r((\lambda 1-x)_{+}),
\end{equation*}%
is spectral, and belongs to the maximal commutative $^{\ast }$-subalgebra of 
$A$ generated by $x$. Let now for $\alpha \in \Lambda ,$ 
\begin{equation*}
\{_{\alpha }\lambda _{i}\}_{i=0}^{m},
\end{equation*}%
be an arbitrary partition of the interval 
\begin{equation*}
\lbrack -\left\Vert \pi _{\alpha }(x)\right\Vert _{\alpha },\left\Vert \pi
_{\alpha }(x)\right\Vert _{\alpha }+\epsilon ],
\end{equation*}%
for a given $\epsilon >0,$ i.e. 
\begin{equation*}
-\left\Vert \pi _{\alpha }(x)\right\Vert _{\alpha }=\text{ }_{\alpha
}\lambda _{0}<...<\text{ }_{\alpha }\lambda _{m}=\left\Vert \pi _{\alpha
}(x)\right\Vert _{\alpha }+\epsilon ,
\end{equation*}

and $\varphi _{\alpha }^{\beta }$ is a homeomorphisms from $\overline{%
\mathbb{R}
}$ to $%
\mathbb{R}
,$ such that for any $\alpha ,\beta \in \Lambda ,$ 
\begin{equation*}
(\varphi _{\alpha }^{\beta })^{-1}(_{\alpha }\lambda _{i})=\text{ }_{\beta
}\lambda _{i},
\end{equation*}%
and for each 
\begin{equation*}
i,j\in 0,1,...,m-1,
\end{equation*}%
\begin{equation*}
\frac{_{\beta }\lambda _{i+1}-\text{ }_{\beta }\lambda _{i}}{_{\alpha
}\lambda _{i+1}-\text{ }_{\alpha }\lambda _{i}}=\frac{_{\beta }\lambda
_{j+1}-\text{ }_{\beta }\lambda _{j}}{_{\alpha }\lambda _{j+1}-\text{ }%
_{\alpha }\lambda _{j}}.
\end{equation*}

Let now 
\begin{equation*}
\lambda _{0}=-\sup_{\alpha }\left\Vert \pi _{\alpha }(x)\right\Vert _{\alpha
}=-\left\Vert x\right\Vert _{\infty },
\end{equation*}%
\begin{equation*}
\lambda _{m}=\sup_{\alpha }\left\Vert \pi _{\alpha }(x)\right\Vert _{\alpha
}+\epsilon =\left\Vert x\right\Vert _{\infty }+\epsilon ,
\end{equation*}%
$\epsilon >0,$ and $\lambda _{i},$ 
\begin{equation*}
i,j=1,2,...,m-1,
\end{equation*}%
are such that for any $\alpha \in \Lambda ,$%
\begin{equation*}
\frac{_{\alpha }\lambda _{i+1}-\text{ }_{\alpha }\lambda _{i}}{\lambda
_{i+1}-\lambda _{i}}=\frac{_{\alpha }\lambda _{j+1}-\text{ }_{\alpha
}\lambda _{j}}{\lambda _{j+1}-\lambda _{j}}.
\end{equation*}

Let us now consider the integral sum 
\begin{equation*}
\sigma =\dsum\limits_{n=0}^{m}\mu _{n}(e_{\lambda _{n}}(x)-e_{\lambda
_{n-1}}(x)),
\end{equation*}%
\begin{equation*}
\lambda _{n-1}\leq \mu _{n}\leq \lambda _{n}.
\end{equation*}%
let us now take for each $\alpha ,\beta \in \Lambda ,$ $_{\alpha }\mu _{n},$
such that 
\begin{equation*}
_{\alpha }\lambda _{n-1}\leq \text{ }_{\alpha }\mu _{n}\leq \text{ }_{\alpha
}\lambda _{n},
\end{equation*}%
and 
\begin{equation*}
(\varphi _{\alpha }^{\beta })^{-1}(_{\alpha }\mu _{i})=\text{ }_{\beta }\mu
_{i},
\end{equation*}%
and 
\begin{equation*}
\frac{_{\alpha }\lambda _{n}-\text{ }_{\alpha }\mu _{n}}{\lambda _{n}-\mu
_{n}}=\frac{_{\alpha }\mu _{n}-\text{ }_{\alpha }\lambda _{n-1}}{\mu
_{n}-\lambda _{n-1}}
\end{equation*}

Let us consider in $A_{\alpha }$ for each $\alpha \in \Lambda ,$ the
integral sum 
\begin{equation*}
\pi _{\alpha }(\sigma )=\dsum\limits_{n=0}^{m}\text{ }_{\alpha }\mu _{n}(\pi
_{\alpha }(e_{\lambda _{n}}(x))-\pi _{\alpha }(e_{\lambda _{n-1}}(x))).
\end{equation*}%
One can easily see that for each $\alpha \in \Lambda ,$ 
\begin{equation*}
\left\Vert \pi _{\alpha }(x)-\pi _{\alpha }(\sigma )\right\Vert _{\alpha
}=\left\Vert \pi _{\alpha }(x-\sigma )\right\Vert _{\alpha }\leq \delta
_{\alpha },
\end{equation*}%
where 
\begin{equation*}
\delta _{\alpha }=\sup_{n}(_{\alpha }\lambda _{n}-\text{ }_{\alpha }\lambda
_{n-1}).
\end{equation*}

Therefore, for each $\alpha \in \Lambda ,$ the integral sum $\pi _{\alpha
}(\sigma )$ converges to $\pi _{\alpha }(x)$ in $A_{\alpha }$ in its norm
topology as 
\begin{equation*}
m\rightarrow \infty .
\end{equation*}%
Thus, $\sigma $ converges to $x$ in A in its projective topology, as 
\begin{equation*}
m\rightarrow \infty .
\end{equation*}
\end{proof}

\end{document}